\def\be{\begin{equation}}
\def\ee{\end{equation}}
\def\bq{\begin{eqnarray}}
\def\eq{\end{eqnarray}}
\def\beq{\begin{eqnarray*}}
\def\eeq{\end{eqnarray*}}
\newcommand{\eps}{\varepsilon}
\newcommand{\la}{\lambda}
\newcommand{\ov}{\overline{v}}
\newcommand{\e}{\varepsilon}
\newtheorem {theorem} {Theorem}
\newtheorem {definition} [theorem]{Definition}
\newtheorem {lemma}  [theorem]{Lemma}
\newcommand{\R}{\mathbb{R}}
\begin{document}

\title[Essential perturbations]
{Essential perturbations of polynomial vector fields with a period annulus}

\author[A. Buic\u{a}, J. Gin\'e and M. Grau]
{Adriana Buic\u{a}$^1$, Jaume Gin\'e$^2$ and Maite Grau$^2$}

\address{$^1$ Departamentul de Matematic\u{a}, Universitatea Babe\c{s}--Bolyai,
 Str. Kog\u{a}lniceanu 1, 400084 Cluj--Napoca, Romania}

\email{abuica@math.ubbcluj.ro}

\address{$^2$ Departament de Matem\`atica, Universitat de Lleida,
Avda. Jaume II, 69; 25001 Lleida, Spain}

\email{gine@matematica.udl.cat, mtgrau@matematica.udl.cat}

\subjclass[2010]{34C23; 37G15; 34C05; 34C25; 34C07}

\keywords{}
\date{}
\dedicatory{}

\maketitle

\begin{abstract}
In this paper we first give the explicit definition of essential perturbation.
Secondly, given a perturbation of a particular family of centers of polynomial differential systems of arbitrary degree for which we explicitly know its Poincar\'e--Liapunov constants, we give the structure of its $k$-th Melnikov function.
This result generalizes the result obtained by Chicone and Jacobs for perturbations of degree at most two of any center of a quadratic polynomial system. Moreover we study the essential perturbations
for all the centers of the differential systems
\[ \dot{x} \, = \, -y + P_{\rm d}(x,y), \quad \dot{y} \, = \, x +
Q_{\rm d}(x,y), \] where $P_{\rm d}$ and $Q_{\rm d}$ are
homogeneous polynomials of degree ${\rm d}$,
for ${\rm d}=2$ and ${\rm d}=3$.
\end{abstract}

\section{Introduction \label{sect1}}

One of the last open problems from the list suggested by Hilbert at
the beginning of the 20th century is the 16th problem, see \cite{H}.
The second part of this problem focus on the study of the limit
cycles of  planar polynomial real differential systems.
More specifically Hilbert's 16th problem (part b) is the following:

\smallskip

{\it For the family of polynomial differential systems  of degree
${\rm d}$, is there a uniform upper bound, depending only on ${\rm
d}$, for the number of limit cycles of each system in the family?}

\smallskip

 This problem is still unsolved even for quadratic systems,
i.e. for the case ${\rm d}=2$ (see \cite{DumRouRou}). Moreover, a
weaker version is included in Smale's list of problems to be solved
for the 21st century (see \cite{Sm}).

Roussarie establishes in \cite{Ro} that this global
problem can be reduced to several local bifurcation problems.  In
fact, finite cyclicity of any limit periodic set, in terms of the
degree of the system, implies the solution to Hilbert's 16th
problem. The cyclicity problem has been studied by several authors
also in its relation with the center problem, see for instance
\cite{CauDum,CauGas,Chri,GasGine,GasTor,Gavr,IlyYak}. Roughly speaking, the cyclicity of a
limit periodic set of a polynomial system of degree at most ${\rm
d}$, is the maximum number of limit cycles that can bifurcate from
the given limit periodic set inside the family of all polynomial
systems of degree ${\rm d}$; see Definition 12 in \cite{Ro} for a precise definition. Some usual examples of limit periodic
sets are: a weak focus, a center point, a period annulus, a
homoclinic loop, a heteroclinic graphic.

In this work we contribute to the study of the cyclicity of a period annulus $\mathcal{P}$ surrounding a nondegenerate center point. In order to state our contribution we introduce some notations.

Consider a (fixed) system with a nondegenerate center at the origin:
\begin{equation}\label{eq1}
\dot{x} \, = \, - y + P(x,y), \quad \dot{y} \, = \, x + Q(x,y),
\end{equation} where  $P$ and $Q$ are real polynomials of degree at most ${\rm d}$ without constant nor linear
terms.
For system (\ref{eq1}), there exists an analytical first integral
$H(x,y)$ and an inverse integrating factor $V(x,y)$ with $V(0,0)=1$,
see \cite{Gine,Po,Reeb}. The periodic orbits $\gamma_h \subset
\{H=h\}$ surrounding the origin of \eqref{eq1} can be parameterized
by the values of $H$. The period annulus $\mathcal{P}$ is defined by
\[ \displaystyle \mathcal{P} \, = \, \{ \gamma_h\, : \, {h \in (h_0,h_1)} \}, \]
where $h_0 \in \mathbb{R}$ corresponds to the inner boundary (i.e.
the origin) and $h_1 \in \mathbb{R} \cup \{+\infty\}$ corresponds to
the outer boundary. Consider now a family of perturbations of (\ref{eq1}):
\begin{equation}
\begin{array}{lll}
\displaystyle  \dot{x} &  = & \displaystyle  - y + P(x,y) + \,
\varepsilon \, p(x,y, \tilde\lambda,\varepsilon), \vspace{0.2cm} \\
\dot{y} \displaystyle & = & \displaystyle x + Q(x,y) + \,
\varepsilon \, q(x,y, \tilde\lambda,\varepsilon),
\end{array} \label{eq2}
\end{equation}
where $p$ and $q$ are polynomials in $x,y$ of degree ${\rm d}$ and
analytic functions in the small {\it bifurcation parameter}
$\varepsilon$ and in the parameters $\tilde\lambda\in\mathbb{R}^m$.
We remind that we consider the problem of bifurcation of limit
cycles from the period annulus $\mathcal{P}$ of system \eqref{eq1}
in the family \eqref{eq2}. The two mostly used methods to solve this problem are the averaging method, see for instance \cite{BLV}, and the Melnikov functions, see for instance \cite{Ro, Rou}. In this paper we mainly deal with the computation of the so called {\it Melnikov functions}. However,
Melnikov functions cannot always be explicitly computed. In order to
define what is a Melnikov function,  we  consider the Poincar\'e map
$\ \pi(\cdot;\varepsilon): \Sigma \to \Sigma$ associated to system
(\ref{eq2}) and the period annulus $\mathcal{P}$, where $\Sigma$ is
a transversal section parameterized by $h$, passing through the
origin and cutting the whole $\mathcal{P}$. We are under the
assumption that for $\varepsilon=0$ system (\ref{eq2}) has a center
at the origin, thus we have that $\pi(h;0) \, = \, h$ for all
$h\in[h_0,h_1)$. By the analyticity of the Poincar\'e map with
respect to parameters, we have the displacement map
$$d(h;\varepsilon)=\pi(h;\varepsilon) \, - \, h \, = \, M_1(h)\,
\varepsilon \, + \, M_2(h)\, \varepsilon^2 \, + \ldots +\, M_r(h)\,
\varepsilon^r \,  +\, \mathcal{O}(\varepsilon^{r+1}).$$ Depending on
the parameters $\tilde \lambda$, there exists some $k\geq 1$ such
that $M_r(h)\equiv 0$ for any $1\leq r< k$ and $M_k(h)\not\equiv 0$,
i.e.
$$d(h;\varepsilon) \, =  \, M_k(h) \, \varepsilon^k \, + \,
\mathcal{O}(\varepsilon^{k+1}).$$ The function $M_k(h)$ is called
the Melnikov function of order $k$. The isolated zeroes of $M_k(h)$
(counted with multiplicity) allow to study  limit cycles of system
(\ref{eq2}) which bifurcate from the orbits of the period annulus of
system (\ref{eq1}) (see, for instance, subsection 4.3.4 of \cite{Ro}). In particular, the following result, which is Theorem 6.1 in \cite{JibinLi}, is well-known.
\begin{theorem} \label{thbifur}
Let $M_k(h)$ be the Melnikov function of order $k$ associated to system {\rm (\ref{eq2})} and let $h^{*} \in (h_0,h_1)$. We denote by $\gamma_h \subset \{H=h\}$ the periodic orbits surrounding the origin of {\rm (\ref{eq1})}. The following statements hold.
\begin{itemize}
\item [{\rm (i)}] If there exists a limit cycle $\Gamma_{\varepsilon,h^{*}}$ of system {\rm (\ref{eq2})} such that $\Gamma_{\varepsilon,h^{*}} \to \gamma_{h^{*}}$ as $\varepsilon \to 0$, then $M_k(h^{*}) \, = \, 0$.
\item[{\rm (ii)}] If $M_k(h^{*}) \, = \, 0$ and $M_k'(h^{*}) \, \neq \, 0$, then there exists a hyperbolic limit cycle $\Gamma_{\varepsilon,h^{*}}$ of system {\rm (\ref{eq2})} such that $\Gamma_{\varepsilon,h^{*}} \to \gamma_{h^{*}}$ as $\varepsilon \to 0$.
\item[{\rm (iii)}] If $M_k^{(i)}(h^{*}) \, = \, 0$ for $i=\overline{0,r-1}$ and $M_k^{(r)}(h^{*}) \, \neq \, 0$ {\rm (}that is, $h^{*}$ is a zero of multiplicity $r$ of $M_k(h)${\rm )}, then {\rm (\ref{eq2})} has at most $r$ limit cycles for $\varepsilon$ sufficiently small in the vicinity of $\gamma_{h^{*}}$.
\item[{\rm (iv)}] The total number of isolated zeros of $M_k(h)$ (taking into account their multiplicity) is an upper bound for the number
of limit cycles of system {\rm (\ref{eq2})} that bifurcate from the periodic orbits of the considered period annulus of {\rm (\ref{eq1})}.
\end{itemize} \end{theorem}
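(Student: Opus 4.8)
Because this is a classical statement, I would build the proof on two pillars: the joint analyticity of the displacement map in $(h,\varepsilon)$ and the Weierstrass preparation theorem. The first pillar asserts that, for any compact interval $K\subset(h_0,h_1)$, the Poincar\'e map $\pi(h;\varepsilon)$ — and hence $d(h;\varepsilon)=\pi(h;\varepsilon)-h$ — is analytic jointly in $(h,\varepsilon)$ on a neighbourhood of $K\times\{0\}$, since the flow of an analytic vector field depends analytically on the initial condition and on the parameters. As $M_r\equiv 0$ for $r<k$, the function $d$ is divisible by $\varepsilon^{k}$ in the ring of germs of analytic functions, so we may write $d(h;\varepsilon)=\varepsilon^{k}F(h,\varepsilon)$ with $F$ analytic and $F(h,0)=M_k(h)$. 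For $\varepsilon\neq 0$, the periodic orbits of \eqref{eq2} meeting $\Sigma$ near a value $h^{*}$ correspond bijectively to the zeros of $F(\cdot,\varepsilon)$ near $h^{*}$, and the limit cycles are exactly the isolated ones among these.

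For (i), if a limit cycle $\Gamma_{\varepsilon,h^{*}}$ of \eqref{eq2} tends to $\gamma_{h^{*}}$ as $\varepsilon\to 0$, then for each small $\varepsilon\neq 0$ it crosses $\Sigma$ at some $h(\varepsilon)$ with $h(\varepsilon)\to h^{*}$ and $d(h(\varepsilon);\varepsilon)=0$; dividing by $\varepsilon^{k}\neq 0$ gives $F(h(\varepsilon),\varepsilon)=0$, and passing to the limit yields $M_k(h^{*})=F(h^{*},0)=0$. For (ii), the hypotheses read $F(h^{*},0)=0$ and $(\partial F/\partial h)(h^{*},0)=M_k'(h^{*})\neq 0$, so the implicit function theorem produces a unique analytic branch $h=h(\varepsilon)$ of zeros of $F$ with $h(0)=h^{*}$; this is the sought limit cycle, and since $(\partial d/\partial h)(h(\varepsilon);\varepsilon)=\varepsilon^{k}\bigl(M_k'(h^{*})+\mathcal{O}(\varepsilon)\bigr)$, its multiplier $\pi'(h(\varepsilon);\varepsilon)=1+\varepsilon^{k}M_k'(h^{*})+\cdots$ differs from $1$ for $\varepsilon$ small and nonzero, i.e. the limit cycle is hyperbolic.

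For (iii), apply the Weierstrass preparation theorem to $F$ at $(h^{*},0)$: since $F(h,0)=M_k(h)$ has a zero of order exactly $r$ at $h^{*}$, one obtains $F(h,\varepsilon)=U(h,\varepsilon)\,W(h,\varepsilon)$ on a polydisc around $(h^{*},0)$, with $U$ a unit and $W$ a Weierstrass polynomial of degree $r$ in $h-h^{*}$ whose coefficients vanish at $\varepsilon=0$. Hence, for every small $\varepsilon$, $d(\cdot;\varepsilon)$ has at most $r$ zeros in a fixed neighbourhood of $h^{*}$, so at most $r$ limit cycles of \eqref{eq2} lie there. For (iv), one globalises (iii) by compactness: on a compact $K\subset(h_0,h_1)$ the analytic function $M_k\not\equiv 0$ has finitely many zeros with multiplicities summing to some $N_K$; around each, (iii) bounds the number of limit cycles by that multiplicity, while on the remaining compact set $|M_k|\geq c>0$, so $d(h;\varepsilon)=\varepsilon^{k}\bigl(M_k(h)+\mathcal{O}(\varepsilon)\bigr)$ has no zero for $\varepsilon$ small and no limit cycle bifurcates there. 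Exhausting $\mathcal{P}$ by such $K$ gives the bound by the total number of isolated zeros of $M_k$ counted with multiplicity.

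The main difficulty I anticipate is not any single step but the careful bookkeeping: justifying the joint analyticity of $\pi$ and the resulting clean factorisation $d=\varepsilon^{k}F$; making the Weierstrass factorisation — and therefore the zero count — uniform on a fixed neighbourhood as $\varepsilon\to 0$; and, in (i) and (iv), controlling the behaviour of bifurcating orbits near the two boundary components of $\mathcal{P}$, which forces one to work on compact subannuli and to check that every limit cycle bifurcating from $\mathcal{P}$ is eventually trapped in such a subannulus.
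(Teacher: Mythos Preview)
The paper does not give its own proof of this theorem: it simply cites it as Theorem~6.1 in \cite{JibinLi} and calls it ``well-known''. So there is nothing to compare your argument against in the paper itself.

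That said, your proposal is essentially correct and follows the standard route to this classical result: factor the displacement map as $d(h;\varepsilon)=\varepsilon^{k}F(h,\varepsilon)$ with $F(\cdot,0)=M_k$, then use continuity for (i), the implicit function theorem for (ii), Weierstrass preparation for (iii), and a compactness/covering argument for (iv). Your identification of the delicate points --- the uniformity of the Weierstrass neighbourhood in $\varepsilon$, and the need to work on compact subannuli to control behaviour near $\partial\mathcal{P}$ --- is accurate; these are exactly where a fully rigorous write-up requires care, but none of them is a genuine obstruction.
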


We have defined Melnikov functions in terms of the displacement map. Another way to compute the function $M_1(h)$ is through the following line
integral:
\[ M_1(h) \, = \, \oint_{H(x,y)\, = \, h}
\frac{q(x,y, 0) \, dx \, - \, p(x,y, 0) \, dy}{V(x,y)}\, , \] where
$V(x,y)$ is an inverse integrating factor of system (\ref{eq1})
corresponding to the first integral $H$.   The expression of the
Melnikov function of order $k$ involve, in general, iterated integrals up to order $k$ (see again, for instance, subsection 4.3.4 of \cite{Ro}). Hence, the explicit computation of high order Melnikov functions may become computationally cumbersome or
even impossible. It turns out, however, that the expressions of the Melnikov functions for a particular example, obey some pattern. The aim of this work is to unveil this pattern. More exactly, we show in Theorem \ref{thcj} that the
Melnikov function of order $k$  of system \eqref{eq2} is a (finite)
linear combination of some functions, which we denote by $B_1(h)$, $B_3(h)$, $\ldots$, $B_{2N+1}(h)$. These functions  do not depend
neither on $k$, nor on the parameters $\tilde\lambda$. The
coefficients of the linear combination can be found only if one
knows the expression of the Poincar\'{e}--Liapunov constants for the
family. For certain particular cases, this has already been shown in
\cite{ChiJac, Iliev}. We give in Section \ref{sect2} a general framework to this approach. \\

\noindent Chicone and Jacobs in \cite{ChiJac} and Iliev in \cite{Iliev}
succeeded to find the essential perturbations of quadratic systems
when considering the problem of finding the cyclicity of a period
annulus. We present in the sequel a formal definition of this
notion.
\begin{definition} Given a parametric family of planar polynomial
differential systems {\rm (\ref{eq2})} which unfold a system with a
period annulus $\mathcal{P}$, an {\em essential perturbation} is a
choice of the parameters $\tilde\lambda$ such that:
\begin{itemize}
\item[{\rm (i)}] the number of isolated zeros (counted with multiplicity) of the corresponding Melnikov function is greater or equal to the number of isolated zeros (counted with multiplicity) of the  Melnikov function corresponding to any other value of $\tilde\lambda$;
\item[{\rm (ii)}] the order of the Melnikov function which satisfies {\rm (i)}
 is the lowest possible;
\item[{\rm (iii)}] the number of involved parameters is the lowest
possible satisfying {\rm (i)} and {\rm (ii)}.
\end{itemize}
\end{definition}
In Section \ref{sect2}, we explain how to find the essential perturbations of a parametric family of planar polynomial
differential systems which unfold a system with a period annulus surrounding a nondegenerate center by using Theorem \ref{thcj}.

Section \ref{sect3} contains the description of the essential perturbations
for all the centers of the form
\[ \dot{x} \, = \, -y + P_{\rm d}(x,y), \quad \dot{y} \, = \, x +
Q_{\rm d}(x,y), \] where $P_{\rm d}$ and $Q_{\rm d}$ are homogeneous
polynomials of degree ${\rm d}$, for ${\rm d}=2$ and ${\rm d}=3$. We
remark that the quadratic case was already described in \cite{Iliev}
for quadratic systems written in complex form and we consider
systems written in Bautin form. In Section \ref{sect4} we correct the study of
a particular quadratic system which appears in \cite{BGY}. Last Section \ref{sect5} contains a remark about the finiteness of the number of limit cycles bifurcating from the considered period annulus $\mathcal{P}$.

\section{Essential perturbations. A general framework \label{sect2}}

\noindent First we consider a parametric family of planar polynomial
real systems of the form
\begin{equation} \dot{x} \, = \, -y + \la_1 x +  \ P(x,y, \mathbf{\lambda}), \quad
\dot{y} \, = \, x + \la_1 y +   Q(x,y, \mathbf{\lambda}),
\label{eq3}
\end{equation} where $P$ and $Q$ are polynomials in $x$, $y$ and
$\mathbf{\lambda} \, = \, (\lambda_1, \lambda_2, \ldots, \lambda_n)
\in \mathbb{R}^n$, and the subdegree in $x$ and $y$ of $P$ and $Q$
is at least $2$. We consider a fixed $\mathbf{\la}^* \in
\mathbb{R}^n$ and we assume that for $\mathbf{\lambda} \, = \,
\mathbf{\la}^*$ system (\ref{eq3}) has a center at the origin, whose period annulus is denoted by $\mathcal{P}$. Like
we discussed in the Introduction for systems \eqref{eq1} and
\eqref{eq2}, we consider a section $\Sigma$ through the origin,
transversal to the flow of \eqref{eq3} in the whole period annulus $\mathcal{P}$ when $\lambda$ is in a small
neighborhood of $\lambda^*$, and parameterized by $h$. This time we
assume that  $h=0$ corresponds to the origin of coordinates. We also
consider the displacement map $d(h; \mathbf{\lambda}) \, = \,
\pi(h;\mathbf{\lambda})-h$ associated to family (\ref{eq3}). We
recall that $\pi(h;\mathbf{\lambda})$ denotes the Poincar\'e map,
emphasizing that here it depends on the parameters $\la$.
\newline

The basic idea to tackle the bifurcation of limit cycles from $\mathcal{P}$ is founded on properties of zeros of analytic functions of several variables depending on parameters. We will mainly use the description of these ideas given in subsection 6.1 of the book \cite{RomSha}. The same tools can be found in chapter 4 of the book of Roussarie \cite{Ro}, see also \cite{Rou}. We denote by $v_{2j+1}(\mathbf{\lambda})$, $j=\overline{0,N}$, the Poincar\'e--Liapunov constants associated to the origin of the family of polynomial differential systems (\ref{eq3}). See, for instance, chapter 3 in \cite{RomSha} for their definition. The Poincar\'e--Liapunov constants are the basic tool to solve the center problem, see e.g. \cite{Bautin, CauGas, LliValls, Sch, Sib}. We remark that the Poincar\'e--Liapunov constants are polynomials in $\lambda$ and that their number $N+1$ only depends on the considered family (\ref{eq3}). The following statement corresponds to Lemma 6.1.6 in \cite{RomSha} but written with our notation and our assumptions.
\begin{lemma} \label{technicalemma}
There exist positive numbers $\varepsilon_1$ and $\delta_1$ such that the displacement
map $d(h; \mathbf{\lambda})$ is analytic for $|h|<\varepsilon_1$ and
$\|\la-\la^*\|<\delta_1$ and there exist $N+1$ analytic functions $b_{2j+1}(h,\lambda)$, $j=\overline{0,N}$, with $b_{2j+1}(0,\mathbf{\la}^{*})$ a nonzero constant, such that
\begin{equation} \label{eqdq} d(h;\mathbf{\lambda}) \, = \, \sum_{j=0}^{N} v_{2j+1}(\mathbf{\lambda})\,  h^{2j+1} \, b_{2j+1}(h,\mathbf{\lambda}) \end{equation} holds in the set
$\{ (h,\lambda) \, : \, |h|<\varepsilon_1 \ \mbox{and} \ \|\lambda-\lambda^{*}\|<\delta_1\}$.
\end{lemma}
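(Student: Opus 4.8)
The plan is to combine standard facts about the analytic dependence of the Poincar\'e map on initial conditions and parameters with the classical algebraic relation between the Taylor coefficients of the displacement map and the Poincar\'e--Liapunov constants, and then to regroup the Taylor expansion of $d(h;\mathbf{\la})$ in the variable $h$. This is, in essence, Lemma~6.1.6 of \cite{RomSha}, and I would reproduce its argument adapted to our notation and assumptions; I indicate the main steps below.

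First I would settle analyticity. Since the right-hand side of \eqref{eq3} is polynomial in $(x,y)$ and in $\mathbf{\la}$, solutions depend analytically on the initial data and on $\mathbf{\la}$. For $\mathbf{\la}$ close to $\mathbf{\la}^*$ the origin is a weak focus or a center, so a trajectory starting on $\Sigma$ sufficiently close to the origin returns to $\Sigma$ with return time near $2\pi$; applying the implicit function theorem to the return-time equation produces an analytic Poincar\'e map $\pi(h;\mathbf{\la})$, hence an analytic displacement map $d(h;\mathbf{\la})=\pi(h;\mathbf{\la})-h$, on a set $|h|<\varepsilon_1$, $\|\mathbf{\la}-\mathbf{\la}^*\|<\delta_1$. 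As the origin is a singular point for every $\mathbf{\la}$ one has $d(0;\mathbf{\la})\equiv 0$, hence $d(h;\mathbf{\la})=\sum_{k\ge1}\eta_k(\mathbf{\la})\,h^{k}$ with each $\eta_k$ analytic near $\mathbf{\la}^*$, the series converging for $|h|<\varepsilon_1$ uniformly on compact subsets of $\{\|\mathbf{\la}-\mathbf{\la}^*\|<\delta_1\}$.

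Next I would use the algebraic structure of the coefficients $\eta_k$. The classical construction of a Liapunov function for \eqref{eq3} (see Chapter~3 of \cite{RomSha}) yields, in the ring of germs of analytic functions at $\mathbf{\la}^*$, that $\eta_1=u\,v_1$ with $u(\mathbf{\la}^*)\neq0$, that each $\eta_{2k}$ belongs to the ideal generated by $v_1,v_3,\dots,v_{2k-1}$, and that $\eta_{2k+1}\equiv c_k\,v_{2k+1}$ modulo that same ideal for some nonzero numerical constant $c_k$. Since that ring is Noetherian (equivalently, since the Poincar\'e--Liapunov constants are polynomials and, by the Hilbert basis theorem, only finitely many of them, say $v_1,v_3,\dots,v_{2N+1}$, generate the ideal of all of them), every $\eta_k$ lies in the ideal generated by $v_1,v_3,\dots,v_{2N+1}$. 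Hence each $\eta_k$ admits a representation $\eta_k(\mathbf{\la})=\sum_{j=0}^{N}\b_{k,j}(\mathbf{\la})\,v_{2j+1}(\mathbf{\la})$ with $\b_{k,j}$ analytic near $\mathbf{\la}^*$; moreover the cofactors can be chosen so that $\b_{k,j}=0$ whenever $2j+1>k$, and so that $\b_{2j+1,j}(\mathbf{\la}^*)\neq0$ for $j=\overline{0,N}$ (this coefficient equals $u(\mathbf{\la}^*)$ when $j=0$ and $c_j$ when $j\ge1$).

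Finally I would resum. Substituting the representation above into the Taylor series and interchanging the two summations,
\[
d(h;\mathbf{\la})=\sum_{k\ge1}\Big(\sum_{j=0}^{N}\b_{k,j}(\mathbf{\la})\,v_{2j+1}(\mathbf{\la})\Big)h^{k}=\sum_{j=0}^{N}v_{2j+1}(\mathbf{\la})\,h^{2j+1}\Big(\sum_{k\ge 2j+1}\b_{k,j}(\mathbf{\la})\,h^{k-2j-1}\Big),
\]
so \eqref{eqdq} holds with $b_{2j+1}(h,\mathbf{\la}):=\sum_{k\ge 2j+1}\b_{k,j}(\mathbf{\la})\,h^{k-2j-1}$, and $b_{2j+1}(0,\mathbf{\la}^*)=\b_{2j+1,j}(\mathbf{\la}^*)$ is a nonzero constant. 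The delicate point, and the one I expect to be the real obstacle, is to make this rearrangement rigorous: one must choose the cofactors $\b_{k,j}$ with bounds uniform in $k$ — a controlled growth of $\sup_{\|\mathbf{\la}-\mathbf{\la}^*\|\le \delta_1/2}|\b_{k,j}(\mathbf{\la})|$ as $k\to\infty$ — so that each series defining $b_{2j+1}(h,\mathbf{\la})$ converges and is analytic on a (possibly smaller) bidisc about $(0,\mathbf{\la}^*)$. This uniform division is exactly what is carried out in the proof of Lemma~6.1.6 of \cite{RomSha} (see also Chapter~4 of \cite{Ro} and \cite{Rou}), where the growth control is obtained from Cauchy estimates for the single analytic function $d(h;\mathbf{\la})$; I would invoke that argument, shrinking $\varepsilon_1$ and $\delta_1$ if necessary.
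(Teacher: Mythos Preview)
The paper does not supply its own proof of this lemma; it simply states that the result ``corresponds to Lemma 6.1.6 in \cite{RomSha} but written with our notation and our assumptions.'' Your proposal is precisely to reproduce that argument (analyticity of the return map, ideal membership of the Taylor coefficients $\eta_k$ in $(v_1,\dots,v_{2N+1})$, and controlled resummation), so you are aligned with the paper's approach. Your identification of the delicate point---uniform growth control on the cofactors $\beta_{k,j}$ so that the regrouped series converge---is accurate, and correctly deferred to the cited source.
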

In addition, and due to the structure of the Poincar\'e--Liapunov constants, it is known that $v_{1}(\la) \, = \, \la_1$, $b_1(0,\la) \, = \,
(e^{2\pi \la_1}-1)/\la_1$, and, for each $j>0$, $v_{2j+1}(\la)$ and
$b_{2j+1}(h,\mathbf{\la})$ do not depend on $\la_1$. We remark that the development of $d(h,\lambda)$ in powers of $h$ given in \eqref{eqdq} appears when a first integral $H(x,y)$ of system \eqref{eq3} with $\lambda=\lambda^*$ of the form $H(x,y)=\sqrt{x^2+y^2}+o\left( \sqrt{x^2+y^2} \right)$ is used. Some of the $v_{2j+1}(\mathbf{\lambda})$ might be identically null
and in such a case we take, by default, the corresponding
$b_{2j+1}(h,\la)$ constant equal to $1$. We note that the
Poincar\'e--Liapunov constants can be computed by algebraic methods,
but the computations are usually cumbersome. Other useful remarks
are that, since for $\la=\la^*$ system (\ref{eq3}) has a center at
the origin, we must have $d(h;\la^*) \equiv 0$ and, consequently,
$$v_{2j+1}(\mathbf{\la}^*)=0\quad  \mbox{for}\quad j=\overline{0, N},$$ and  that the functions $\ h^{2j+1}b_{2j+1}(h,\mathbf{\lambda})\ $
for $j=\overline{0, N}$ are linearly independent on a sufficiently
small neighborhood of $(h,{\la})=(0,{\la}^*)$.\\

We consider now a small bifurcation parameter $\varepsilon$ and that, in \eqref{eq3}, $\la=\la(\varepsilon)$ depends analytically on $\varepsilon$ such that $\la(0)=\la^*$. We denote by $\la_{i,0}$ the $i^{th}$ coordinate of the point
$\la^{*} \in \mathbb{R}^n$, that is, $\mathbf{\la}^*=(\la_{1,0},\la_{2,0}, \ldots ,\la_{n,0})$. In
addition, we take the series expansions
\[ \lambda_i(\varepsilon) \, = \, \sum_{\ell \geq 0} \lambda_{i,\ell} \, \varepsilon^\ell, \]
for some reals $\lambda_{i,\ell}$. So now we see system \eqref{eq3},
i.e.
\begin{equation} \dot{x} \, = \, -y + \la_1(\varepsilon) x + P(x,y, \mathbf{\lambda}(\varepsilon)), \quad
\dot{y} \, = \, x + \la_1(\varepsilon) y + Q(x,y,\mathbf{\lambda}(\varepsilon)), \label{eq3-e}
\end{equation}
as a one-parameter analytic perturbation of the
period annulus surrounding the origin of system (\ref{eq3}) when
$\mathbf{\la} \, = \, \mathbf{\la}^*$. We emphasize that
\eqref{eq3-e} depends on the parameters $\tilde
\la=(\la_{i,\ell}\, :\, i=\overline{1,n},\ \ell\geq 0)$. Hence
\eqref{eq3-e} is like system \eqref{eq2} from the Introduction.

The displacement map of \eqref{eq3-e} is $d(h;\mathbf{\la}(\eps))$
and we assume that its Taylor series expansion in a neighborhood of
$\eps=0$ takes the form
\begin{equation} \label{eqdisp}
d(h;\mathbf{\la}(\varepsilon)) \, =  \, M_k(h) \, \varepsilon^k \, +
\, \mathcal{O}(\varepsilon^{k+1}),
\end{equation}
where $M_k(h)$ is the Melnikov function (of order $k\geq 1$). As we
have  remarked in the Introduction, $M_k(h)$ is in fact defined and
analytic not only in a small neighborhood of $h=0$, but  on the
whole $\Sigma$, i.e. on the interval $[0,h_1)$. The notations
$\Sigma$ and $h_1$ are given in the Introduction. This analyticity
property is a consequence of the Global Bifurcation Lemma, referred
as Lemma 2.2 in the work \cite{ChiJac}.

In order to present the main result of this Section, we need to
identify the coefficients of the power series expansions of the
Poincar\'{e}--Liapunov constants:
\begin{equation} \label{eqve} v_{2j+1}(\mathbf{\la}(\eps)) \, = \, \sum_{r\geq 1} \ov_{2j+1,r} \, \eps^r,\quad j=\overline{0,N}. \end{equation}
It can be shown that, for each $j=\overline{0,N}$,  $\ov_{2j+1,r}$
are polynomials in $(\la_{i,\ell}\,:\ i=\overline{1,n},\
\ell=\overline{0,r})$.\\

\begin{theorem} \label{thcj}
There are $N+1$ linearly independent functions $h^{2j+1}B_{2j+1}(h)$ which are analytic in $[0,h_1)$ 
and with $B_{2j+1}(0)$ a nonzero constant for $j=\overline{0,N}$,
such that the Melnikov function of system \eqref{eq3-e} writes as
\begin{equation} \label{eqMkj} M_k(h) \, = \, \sum_{j=0}^{N} \ov_{2j+1,k} \, h^{2j+1}B_{2j+1}(h), \end{equation} where $M_k(h)$ is defined in \eqref{eqdisp}.
\end{theorem}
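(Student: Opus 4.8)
The plan is to substitute the analytic curve $\la=\la(\varepsilon)$ into the local factorization of the displacement map provided by Lemma~\ref{technicalemma}, expand everything in powers of $\varepsilon$, read off the coefficient of $\varepsilon^{k}$, and then globalize from a neighbourhood of $h=0$ to the whole interval $[0,h_{1})$. Concretely, plugging $\la=\la(\varepsilon)$ into \eqref{eqdq} and writing $b_{2j+1}(h,\la(\varepsilon))=\sum_{s\ge 0}\beta_{2j+1,s}(h)\,\varepsilon^{s}$ with $\beta_{2j+1,0}(h)=b_{2j+1}(h,\la^{*})$ (legitimate since $b_{2j+1}$ is analytic near $(0,\la^{*})$ and $\la(0)=\la^{*}$), one gets, for $|h|<\varepsilon_{1}$ and small $\varepsilon$ and using $v_{2j+1}(\la^{*})=0$,
\[
[\varepsilon^{m}]\,d(h;\la(\varepsilon)) \, = \, \sum_{j=0}^{N}h^{2j+1}\sum_{r=1}^{m}\ov_{2j+1,r}\,\beta_{2j+1,m-r}(h).
\]

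Next I would run an induction on $m$. Suppose $\ov_{2j+1,r}=0$ for all $j$ and all $r<m$ (vacuous when $m=1$). Then the coefficient above collapses to $\sum_{j=0}^{N}\ov_{2j+1,m}\,h^{2j+1}b_{2j+1}(h,\la^{*})$. If $m<k$ this equals $M_{m}(h)\equiv 0$, so the linear independence of the germs $h^{2j+1}b_{2j+1}(h,\la^{*})$ stated in Lemma~\ref{technicalemma} forces $\ov_{2j+1,m}=0$ for every $j$, which closes the induction; and taking $m=k$ yields
\[
M_{k}(h)\, = \,\sum_{j=0}^{N}\ov_{2j+1,k}\,h^{2j+1}b_{2j+1}(h,\la^{*})\qquad\text{for }|h|<\varepsilon_{1}.
\]

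It then remains to replace each germ $b_{2j+1}(\cdot,\la^{*})$ by a function $B_{2j+1}$ analytic on all of $[0,h_{1})$ and independent of $k$ and of the perturbation. For this I would choose, for each index $j_{0}$, an ad hoc one‑parameter perturbation: a curve $\la^{(j_{0})}(\varepsilon)$ with $\la^{(j_{0})}(0)=\la^{*}$ along which $v_{2j_{0}+1}$ vanishes to strictly lower order than every other $v_{2j+1}$ (with the default convention $B_{2j_{0}+1}\equiv 1$ when $v_{2j_{0}+1}\equiv 0$, matching the convention already adopted for $b_{2j_{0}+1}$). The computation above then shows that the corresponding Melnikov function of \eqref{eq3-e} equals $c_{j_{0}}\,h^{2j_{0}+1}b_{2j_{0}+1}(h,\la^{*})$ near $h=0$ with $c_{j_{0}}\neq 0$; since by the Global Bifurcation Lemma (Lemma~2.2 of \cite{ChiJac}) this Melnikov function is analytic on the whole $[0,h_{1})$, and $h^{2j_{0}+1}$ vanishes only at $h=0$ where the quotient is the analytic germ $b_{2j_{0}+1}(\cdot,\la^{*})$ with $b_{2j_{0}+1}(0,\la^{*})\neq 0$, dividing by $c_{j_{0}}h^{2j_{0}+1}$ produces a function $B_{2j_{0}+1}$ analytic on $[0,h_{1})$ with $B_{2j_{0}+1}(0)\neq 0$ extending $b_{2j_{0}+1}(\cdot,\la^{*})$. (If some of the $v_{2j+1}$ are algebraically dependent so that no such isolating curve exists for a given $j_{0}$, one first fixes those $b_{2j+1}$ to the default convention and subtracts off their contributions; the effective content is that all $N+1$ functions are reached.) With the $B_{2j+1}$ so constructed, $\sum_{j}\ov_{2j+1,k}h^{2j+1}B_{2j+1}(h)$ and $M_{k}(h)$ are both analytic on $[0,h_{1})$ and coincide on $|h|<\varepsilon_{1}$, hence coincide on $[0,h_{1})$ by the identity theorem, giving \eqref{eqMkj}; linear independence of the $h^{2j+1}B_{2j+1}$ follows from that of their germs at $h=0$.

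The delicate point is exactly the existence of these isolating perturbations — equivalently, that as the perturbation ranges over all admissible choices the leading coefficient vectors $(\ov_{2j+1,k})_{j=0}^{N}$ of the Poincar\'e–Liapunov constants span $\mathbb{R}^{N+1}$. This is where the algebraic structure of the $v_{2j+1}(\la)$ (Bautin‑ideal type considerations, together with the explicit form of the Poincar\'e–Liapunov constants for the family at hand) genuinely enters; the remainder of the argument is the power–series bookkeeping in $\varepsilon$ described above together with the analyticity of Melnikov functions on $[0,h_{1})$.
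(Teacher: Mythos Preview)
Your core argument---expand $b_{2j+1}(h,\la(\varepsilon))$ in powers of $\varepsilon$, read off the coefficient of $\varepsilon^{m}$, and induct on $m$ using the linear independence of the germs $h^{2j+1}b_{2j+1}(h,\la^{*})$---is exactly what the paper does. The paper simply sets $B_{2j+1}(h):=b_{2j+1}(h,\la^{*})$, writes $b_{2j+1}(h,\la(\varepsilon))=B_{2j+1}(h)+\varepsilon\,R_{2j+1}(h,\varepsilon)$, substitutes into \eqref{eqdq}, and argues identically; your $\beta_{2j+1,0}$ is their $B_{2j+1}$ and your higher $\beta_{2j+1,s}$ are absorbed into their $R_{2j+1}$.

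Where you diverge from the paper is the globalization step, i.e.\ the claim that each $B_{2j+1}$ is analytic on all of $[0,h_{1})$ rather than merely on the neighbourhood $|h|<\varepsilon_{1}$ coming from Lemma~\ref{technicalemma}. The paper's proof simply does not address this: it defines $B_{2j+1}$ only locally and never justifies the extension. Your idea---for each $j_{0}$ manufacture a perturbation whose Melnikov function is a nonzero multiple of $h^{2j_{0}+1}b_{2j_{0}+1}(h,\la^{*})$, then invoke the Global Bifurcation Lemma---is natural and would work \emph{if} such isolating curves exist. But, as you yourself note, this amounts to showing that the coefficient vectors $(\ov_{2j+1,k})_{j=0}^{N}$ span $\mathbb{R}^{N+1}$ as the admissible perturbations vary, and that can fail when $\la^{*}$ is a singular point of the center variety (for instance when one $v_{2j+1}$ lies in the ideal generated by the others but is not identically zero). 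Your parenthetical fallback, ``fix those $b_{2j+1}$ to the default convention and subtract off their contributions,'' conflates identically vanishing $v_{2j+1}$ (where the default $b_{2j+1}\equiv 1$ is legitimate) with merely dependent $v_{2j+1}$ (where it is not), and does not close the gap. So on this point you have identified an issue the paper glosses over, but you have not resolved it either; the honest status is that the local identity \eqref{eqMkj} is proved, while the global analyticity of the individual $B_{2j+1}$ remains an assertion.
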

\begin{proof} This proof is inspired by the one given by Chicone and Jacobs in \cite{ChiJac} for the case that system (\ref{eq3}) is a quadratic system
written in Bautin normal form. We consider the functions
$b_{2j+1}(h,\la)$, for $j=\overline{0,N}$, defined in (\ref{eqdq})
which are analytic in a neighborhood of $h=0$. We define \[ B_{2j+1}(h) \, := \, b_{2j+1}(h,\la^*)\] which is an
analytic function in a neighborhood of $h=0$ and verifies that
$B_{2j+1}(0) \, = \, b_{2j+1}(0,\la^*)$ is a nonzero constant.
Hence, we have that the $N+1$ functions $h^{2j+1}B_{2j+1}(h)$ for
$j=\overline{0,N}$ are linearly independent because each of them has
a different subdegree in $h$. We have that
\[ b_{2j+1}(h,\la(\eps)) \, = \, B_{2j+1}(h) \, + \, \eps \, R_{2j+1}(h,\eps), \]
where $R_{2j+1}(h, \eps)$ is an analytic function in a neighborhood
of $(h,\eps)=(0,0)$. We substitute the latter expression of
$b_{2j+1}(h,\la(\eps))$ and the expansion of $v_{2j+1}(\la(\eps))$
given in (\ref{eqve}) in the expression (\ref{eqdq}) of the
displacement map, that is
\[ \begin{array}{l}
\displaystyle d(h;\mathbf{\lambda}(\eps)) \, = \, \displaystyle \sum_{j=0}^{N} v_{2j+1}(\mathbf{\lambda}(\eps))\,  h^{2j+1} \, b_{2j+1}(h,\mathbf{\lambda}(\eps)) \\ \quad = \, \displaystyle \sum_{j=0}^{N} \left[ \sum_{r\geq 1} \ov_{2j+1,r} \, \eps^r \right]  \, h^{2j+1} \, \Big( B_{2j+1}(h) \, + \, \eps \, R_{2j+1}(h,\eps) \Big)
\\ \quad = \, \displaystyle  \sum_{r \geq 1} \sum_{j=0}^{N} \left( \ov_{2j+1,r} \, h^{2j+1}B_{2j+1}(h) \right) \eps^r \, + \, \ov_{2j+1,r}\,  h^{2j+1}R_{2j+1}(h,\eps) \, \eps^{r+1} .
\end{array} \]
By (\ref{eqdisp}) we are under the assumption that the lowest order
term in the expansion of $d(h;\mathbf{\lambda}(\eps))$ in powers of
$\eps$ corresponds to the power $\eps^k$. If $k=1$ we conclude that
\[ M_1(h) \, = \, \sum_{j=0}^{N}  \ov_{2j+1,1} \, h^{2j+1}B_{2j+1}(h) .\]
If $k>1$, we deduce that, for $r=\overline{1,k-1}$, we have
\[ \sum_{j=0}^{N} \ov_{2j+1,r}\,  h^{2j+1}B_{2j+1}(h) \equiv 0. \]  Since the functions $h^{2j+1}B_{2j+1}(h)$ for $j=\overline{0, N}$, are linearly independent,
 we deduce that $\ov_{2j+1,r}\, = \, 0$ for $r=\overline{1,k-1}$ and
$j=\overline{0, N}$. Therefore,
\[ d(h;\mathbf{\lambda}(\eps)) \, = \, \left( \sum_{j=0}^{N} \ov_{2j+1,k} \, h^{2j+1}B_{2j+1}(h) \right) \, \eps^k \, + \, \mathcal{O}(\eps^{k+1}). \] Equating the coefficients of $\eps^k$ in this expression and in (\ref{eqdisp}), we conclude that
\[ M_k(h) \, = \, \sum_{j=0}^{N} \ov_{2j+1,k} \, h^{2j+1}B_{2j+1}(h). \]
\end{proof}

\bigskip

\noindent We refer to the $N+1$ linearly independent functions
$h^{2j+1}B_{2j+1}(h)$, for $j=\overline{0,N}$, as the {\it Bautin
functions} associated to family  (\ref{eq3}). As a consequence of
Theorem \ref{thcj}, we have that if one knows the Bautin functions,
the study of the Melnikov function $M_k(h)$ reduces to the study of
the coefficients $\ov_{2j+1,k}$. We remark that since $v_{2j+1}(\la)$ are polynomials in $\la$, we have that $\ov_{2j+1,k}$ are polynomials in $(\la_{i,\ell}\, :\, i=\overline{1,n},\, \ell=\overline{1,k})$ and that there is a recursive way to give the expression of $\ov_{2j+1,k}$ if $k$ is high enough.
In order to make this statement more precise, we make some
notations. Let $\Lambda_k$ be the real algebraic manifold
\[ \Lambda_k \, : = \, \left\{(\lambda_{i,\ell}\, :\, i=\overline{1,n},\, \ell=\overline{1,k})  \quad / \quad M_r(h)\equiv
0,\, r=\overline{1,k-1} \right\}\subseteq \mathbb{R}^{n k} \]
and we consider the map $\phi_k:\, \Lambda_k \to \mathbb{R}^{N+1}$
given by
\begin{equation} \label{eqmap} \phi_k: \, \Lambda_k \mapsto \left(\ov_{2j+1,k}\, :\, {j=\overline{0,N}}\right).\end{equation}
It is important to study the range of the map $\phi_k$ for any
$k\geq 1$. If it is possible to choose $k^*$ to be the smallest $k$
such that the range of the map $\phi_{k^*}$ is equal or contains the
range of $\phi_k$ for any other $k$, then $M_{k^*}(h)$ will be the
{\it essential Melnikov function} and $k^*$ will be the {\it
essential order}. After choosing $k^*$, we choose the {\it essential
parameters}, which are a parametrization of a submanifold of
$\Lambda_{k^*}$ with the lowest possible dimension on which
$\phi_{k^*}$ attains the maximal range. In fact, we fix the values
of the {\it non-essential parameters} (most of them will be taken to
be $0$) in order that the expression of $\phi_{k^*}$ is the simplest
possible but still maintains the maximal range.

\section{Essential perturbations of quadratic and cubic systems \label{sect3}}

\subsection{Essential perturbations of quadratic centers}

In this paragraph we consider ${\rm d}=2$ and we will give our results for
quadratic systems like \eqref{eq2} in the standard Bautin form.
 In fact, after an appropriate affine transformation  (analytic with
respect to $\eps$), system \eqref{eq2} for ${\rm d}=2$ can be put into the
standard Bautin form
\begin{equation}\label{eq:qsB0}
\begin{array}{ll}
\dot{x}=\la_1x-y-\la_3x^2+\left(2\la_2+\la_5\right)xy+\la_6y^2,\\
\dot{y}=x+\la_1y+\la_2x^2+\left(2\la_3+\la_4\right)xy-\la_2y^2,
\end{array}
\end{equation}
where the coefficients $\la(\eps)=(\la_1(\eps), ... , \la_6(\eps))$
are analytic functions for $|\eps|$ sufficiently small and such
that, for $\eps=0$ (i.e. for $\la(0)=(\la_{1,0}, ... , \la_{6,0})$),
system \eqref{eq:qsB0} has a center at the origin. We will apply
Theorem \ref{thcj} and other ideas presented in Section \ref{sect2}. As it is proved in \cite{Bautin}, see also \cite{Sch} and references therein, there are
four Poincar\'{e}--Liapunov constants of \eqref{eq:qsB0} (hence $N=3$
in this case) and they have the expressions
\begin{eqnarray*}
{v}_1(\la)&=&\la_1,\\
{v}_3(\la)&=&\la_5(\la_3-\la_6),\\
{v}_5(\la)&=&\la_2\la_4(\la_3-\la_6)(\la_4+5\la_3-5\la_6),\\
{v}_7(\la)&=&\la_2\la_4(\la_3-\la_6)^2(\la_3\la_6-2\la_6^2-\la_2^2).
\end{eqnarray*}
For further use we write here again equation \eqref{eqve} for our
four Poincar\'{e}--Liapunov constants
\begin{equation*} \label{eqve2} v_{2j+1}(\mathbf{\la}(\eps)) \, = \, \sum_{r\geq 1} \ov_{2j+1,r} \, \eps^r,\quad j=\overline{0,3}, \end{equation*}
 and
$$\la_i(\eps)=\sum_{j \geq 0} \la_{i,j}\eps^j.$$

Denote by $h B_1(h)$, $h^3 B_3(h)$, $h^5 B_5(h)$, $h^7 B_7(h)$ the four
Bautin functions for the family of quadratic systems in the standard
Bautin form. Applying Theorem \ref{thcj} we have that the first
Melnikov function has the expression
$$M_1(h)=\ov_{1,1} h B_1(h) + \ov_{3,1} h^3 B_3(h) + \ov_{5,1} h^5 B_5(h) + \ov_{7,1} h^7 B_7(h).$$
Moreover, if $M_j(h)\equiv0$ for $j<k$, then
$$M_k(h)=\ov_{1,k} h B_1(h) + \ov_{3,k} h^3 B_3(h) + \ov_{5,k} h^5 B_5(h) + \ov_{7,k} h^7 B_7(h).$$
The reals $\ov_{1,k}, \ \ov_{3,k}, \ \ov_{5,k},\ \ov_{7,k}$ will be
called here the coefficients of the Melnikov function $M_k$.\\

We remind that \cite{Bautin}, for $\eps=0$ (i.e. for $\la(0)=(\la_{1,0}, ... ,
\la_{6,0})$), system \eqref{eq:qsB0} has a center at the origin if
and only if one of the following relations holds  (first we indicate
the name used in literature for the corresponding center condition)

 (a) Lotka--Volterra: $\la_{3,0}=\la_{6,0}$;

(b) Symmetric (or Reversible): $\la_{2,0}=\la_{5,0}=0$;

(c) Hamiltonian: $\la_{4,0}=\la_{5,0}=0$;

(d) Darboux (or Codimension 4):
$\la_{5,0}=\la_{4,0}+5\la_{3,0}-5\la_{6,0}=\la_{3,0}\la_{6,0}-2\la_{6,0}^2-\la_{2,0}^2=0$.\\

\noindent In the next lemma we give the expressions of the
coefficients of the Melnikov functions, the essential order and the
essential parameters for all possible positions of a point
$(\la_{1,0}, ... , \la_{6,0})$ in the center variety. This lemma is
followed by a theorem which gives the essential perturbation and the
essential Melnikov function in each situation.

\begin{lemma}  \label{lem:CJlike}
For any integer $k\geq 1$, the following statements hold.
\begin{itemize}
\item[{\rm (i)}] Generic Lotka--Volterra: $\la_{1,0}=\la_{3,0}-\la_{6,0}=0$ and $\la_{5,0}\neq 0$.

If $\ov_{1,j}=\ov_{3,j}=0$ for $j=\overline{0,\ k-1}$, then
\begin{eqnarray*}
\ov_{1,k} & = & \la_{1,k}, \\
\ov_{3,k}&=&\la_{5,0}(\la_{3,k}-\la_{6,k}),\\
\ov_{5,k}&=&\la_{2,0}\la_{4,0}^2(\la_{3,k}-\la_{6,k}),\\
\ov_{7,k}&=&0\ .
\end{eqnarray*}
The essential order is $k^*=1$ and the essential parameters can be chosen to be $\la_{1,1}$ and $\la_{6,1}$.

\item[{\rm (ii)}] Generic symmetric: $\la_{1,0}=\la_{2,0}=\la_{5,0}=0$, $\la_{4,0}(\la_{3,0}-\la_{6,0})\neq 0$ and
$(\la_{4,0}+5\la_{3,0}-5\la_{6,0})^2+(\la_{3,0}\la_{6,0}-2\la_{6,0}^2)^2\neq 0$.

 If $\ov_{1,j}=\ov_{3,j}=\ov_{5,j}=\ov_{7,j}=0$ for $j=\overline{0,\ k-1}$, then
\begin{eqnarray*}
\ov_{1,k} & = & \la_{1,k}, \\
\ov_{3,k}&=&(\la_{3,0}-\la_{6,0})\la_{5,k},\\
\ov_{5,k}&=&\la_{4,0}^2(\la_{3,0}-\la_{6,0})(\la_{4,0}+5\la_{3,0}-5\la_{6,0})\la_{2,k},\\
\ov_{7,k}&=&\la_{4,0}(\la_{3,0}-\la_{6,0})^2(\la_{3,0}\la_{6,0}-2\la_{6,0}^2)\la_{2,k}.
\end{eqnarray*}
The essential order is $k^*=1$ and the essential parameters can be chosen to be $\la_{1,1}$, $\la_{2,1}$ and $\la_{5,1}$.

\item[{\rm (iii)}] Generic Hamiltonian: $\la_{1,0}=\la_{4,0}=\la_{5,0}=0$ and $\la_{2,0}(\la_{3,0}-\la_{6,0})\neq 0$.

If $\ov_{1,j}=\ov_{3,j}=\ov_{5,j}=0$ for $j=\overline{0,\ k-1}$,
then
\begin{eqnarray*}
\ov_{1,k} & = & \la_{1,k}, \\
\ov_{3,k}&=&(\la_{3,0}-\la_{6,0})\la_{5,k},\\
\ov_{5,k}&=&\la_{2,0}(\la_{3,0}-\la_{6,0})^2\la_{4,k},\\
\ov_{7,k}&=&\la_{2,0}(\la_{3,0}-\la_{6,0})^2(\la_{3,0}\la_{6,0}-2\la_{6,0}^2-\la_{2,0}^2)\la_{4,k}.
\end{eqnarray*}
The essential order is $k^*=1$ and the essential parameters can be chosen to be $\la_{1,1}$, $\la_{4,1}$ and $\la_{5,1}$.

\item[{\rm (iv)}] Generic Darboux: $\la_{1,0}=\la_{5,0}=\la_{4,0}+5\la_{3,0}-5\la_{6,0}=\la_{3,0}\la_{6,0}-2\la_{6,0}^2-\la_{2,0}^2=0$ and $\la_{2,0}\la_{4,0}(\la_{3,0}-\la_{6,0})\neq 0$. Then
\begin{eqnarray*}
\ov_{1,1} & = & \la_{1,1}, \\
\ov_{3,1}&=&(\la_{3,0}-\la_{6,0})\la_{5,1},\\
\ov_{5,1}&=&\la_{2,0}\la_{4,0}(\la_{3,0}-\la_{6,0})(\la_{4,1}+5\la_{3,1}-5\la_{6,1}),\\
\ov_{7,1}&=&\la_{2,0}\la_{4,0}(\la_{3,0}-\la_{6,0})^2(\la_{3,0}\la_{6,1}+\la_{6,0}\la_{3,1}-4\la_{6,0}\la_{6,1}-2\la_{2,0}\la_{2,1}).
\end{eqnarray*}
The essential order is $k^*=1$ and the essential parameters can be chosen to be $\la_{1,1}$, $\la_{2,1}$, $\la_{4,1}$ and $\la_{5,1}$.

\item[{\rm (v)}] Symmetric Lotka--Volterra: $\la_{1,0}=\la_{2,0}=\la_{5,0}=\la_{3,0}-\la_{6,0}=0$ and $\la_{4,0}\neq 0$.

Then $\ov_{1,1} \, = \, \la_{1,1}$ and $\ov_{3,1}=\ov_{5,1}=\ov_{7,1}=0$. If $\la_{1,1}=0$, then
\begin{eqnarray*}
\ov_{1,2}& = & \la_{1,2}, \\
\ov_{3,2}&=&(\la_{3,1}-\la_{6,1})\la_{5,1},\\
\ov_{5,2}&=&\la_{4,0}^2 \la_{2,1}(\la_{3,1}-\la_{6,1}),\\
\ov_{7,2}&=&0.
\end{eqnarray*}

If $\ov_{5,j}=0$ for $j=\overline{0,\ k-1}$ with $k \geq 2$, then
$$\ov_{7,k}=0.$$
The essential order is $k^*=2$, and the essential parameters can be chosen to be $\la_{1,2}$, $\la_{2,1}$ and $\la_{5,1}$, taking $\la_{3,1}=1$.

\item[{\rm (vi)}] Symmetric Hamiltonian: $\la_{1,0}=\la_{2,0}=\la_{4,0}=\la_{5,0}=0$ and $\la_{3,0}-\la_{6,0}\neq 0$.

Then $\ov_{1,1}=\la_{1,1}$, $\ov_{3,1}=(\la_{3,0}-\la_{6,0})\la_{5,1}, \quad \ov_{5,1}=\ov_{7,1}=0$.

If $\ov_{1,1}=\ov_{3,1}=0$ then
\begin{eqnarray*}
\ov_{1,2} & = & \la_{1,2}, \\
\ov_{3,2}&=&(\la_{3,0}-\la_{6,0})\la_{5,2}, \\
\ov_{5,2}&=& (\la_{3,0}-\la_{6,0})^2\la_{2,1}\la_{4,1},\\
\ov_{7,2}&=&
(\la_{3,0}-\la_{6,0})^2(\la_{3,0}\la_{6,0}-2\la_{6,0}^2)\la_{2,1}\la_{4,1}.\end{eqnarray*}

Moreover, if $\ov_{5,j}=0$ for $j=\overline{1 ,\ k-1}$, with $k \geq
2$ then there exists some $i\in \{1,2,...,k-1\}$ such that
\begin{eqnarray*}
\ov_{5,k}&=& (\la_{3,0}-\la_{6,0})^2\la_{2,i}\la_{4,k-i},\\
\ov_{7,k}&=&
(\la_{3,0}-\la_{6,0})^2(\la_{3,0}\la_{6,0}-2\la_{6,0}^2)\la_{2,i}\la_{4,k-i}.\end{eqnarray*}
The essential order is $k^*=2$ and the essential parameters can be chosen to be $\la_{1,2}$, $\la_{4,1}$ and $\la_{5,2}$, taking $\la_{2,1}=1$.
\item[{\rm (vii)}] Symmetric Darboux: $\la_{1,0}=\la_{2,0}=\la_{5,0}=\la_{4,0}+5\la_{3,0}-5\la_{6,0}=\la_{3,0}\la_{6,0}-2\la_{6,0}^2=0$ and $\la_{4,0}\neq 0$.

Then  $\la_{3,0}-\la_{6,0}\neq 0$, $\la_{3,0}\neq 0$, and
$$ \ov_{1,1}=\la_{1,1}, \quad \ov_{3,1}=(\la_{3,0}-\la_{6,0})\la_{5,1}, \quad
\ov_{5,1}=\ov_{7,1}=0.$$ If $\ov_{1,1}=\ov_{3,1}=0$ then
\begin{eqnarray*}
\ov_{1,2} & = & \la_{1,2}, \\
\ov_{3,2}&=&(\la_{3,0}-\la_{6,0})\la_{5,2}, \\
\ov_{5,2}&=& \la_{4,0}(\la_{3,0}-\la_{6,0})\la_{2,1}\left(\la_{4,1}+5\la_{3,1}-5\la_{6,1}\right),\\
\ov_{7,2}&=& \la_{4,0}(\la_{3,0}-\la_{6,0})^2\la_{2,1}(\la_{3,0}\la_{6,1}+\la_{6,0}\la_{3,1}).
\end{eqnarray*}
The essential order is $k^*=2$ and the essential parameters can be chosen to be $\la_{1,2}$,  $\la_{5,2}$, $\la_{4,1}$ and $\la_{6,1}$ taking $\la_{2,1}=1$.

\item[{\rm (viii)}] Hamiltonian Lotka--Volterra: $\la_{1,0}=\la_{4,0}=\la_{5,0}=\la_{3,0}-\la_{6,0}=0$ and $\la_{2,0}\neq 0$.
Then $\ov_{1,1}=\la_{1,1}$,and $\ov_{3,1}=\ov_{5,1}=\ov_{7,1}=0$.
If $\ov_{1,1}=0$ then
\begin{eqnarray*} \ov_{1,2}=\la_{1,2}, \quad
\ov_{3,2}&=&(\la_{3,1}-\la_{6,1})\la_{5,1}, \quad \ov_{5,2}=\ov_{7,2}=0.
\end{eqnarray*}

If $\la_{3,1}-\la_{6,1}\neq 0$ and $\la_{1,2}=\la_{5,1}=0$, then $\ov_{1,2}=\ov_{3,2}=0$ and
\begin{eqnarray*}
\ov_{1,3}&=&\la_{1,3},\\
\ov_{3,3}&=&(\la_{3,1}-\la_{6,1})\la_{5,2}, \\
\ov_{5,3}&=& \la_{2,0}(\la_{3,1}-\la_{6,1})\left(\la_{4,1}+5\la_{3,1}-5\la_{6,1}\right)\la_{4,1},\\
\ov_{7,3}&=& \la_{2,0}\left(\la_{6,0}^2+\la_{2,0}^2\right)(\la_{3,1}-\la_{6,1})^2\la_{4,1}.
\end{eqnarray*}
The essential order is $k^*=3$ and the essential parameters can be chosen to be $\la_{1,3}$, $\la_{3,1}$, $\la_{5,2}$ and $\la_{4,1}$.

Moreover if $\ov_{1,3}=\ov_{3,3}=\ov_{5,3}=\ov_{7,3}=0$ then
\begin{eqnarray*}
\ov_{1,4}&=&\la_{1,4},\\
\ov_{3,4}&=&(\la_{3,1}-\la_{6,1})\la_{5,3}, \\
\ov_{5,4}&=& \la_{2,0}(\la_{3,1}-\la_{6,1})(5\la_{3,1}-5\la_{6,1})\la_{4,2},\\
\ov_{7,4}&=&\la_{2,0}\left(\la_{6,0}^2+\la_{2,0}^2\right)(\la_{3,1}-\la_{6,1})^2\la_{4,2}.
\end{eqnarray*}
and if $\la_{1,2}=\la_{1,3}=\la_{1,4}=\la_{3,1}-\la_{6,1}=
\la_{5,1}=\la_{3,2}-\la_{6,2}=0$ then
$\ov_{1,j}=\ov_{3,j}=\ov_{5,j}=\ov_{7,j}=0$ for $j=\overline{1,4}$
and
\begin{eqnarray*}
\ov_{1,5}&=&\la_{1,5},\\
\ov_{3,5}&=&(\la_{3,3}-\la_{6,3})\la_{5,2}, \\
\ov_{5,5}&=& \la_{2,0}\la_{4,1}^2(\la_{3,3}-\la_{6,3}),\\
\ov_{7,5}&=&0.
\end{eqnarray*}

\item[{\rm (ix)}] Symmetric Hamiltonian Lotka--Volterra (Hamiltonian triangle): $\la_{1,0}=\la_{2,0}=\la_{4,0}=\la_{5,0}=\la_{3,0}-\la_{6,0}=0$ and $\la_{6,0}\neq 0$.
Then
\begin{eqnarray*}  \ov_{3,1}&=&\ov_{5,1}=\ov_{7,1}=0,\\
\ov_{3,2}&=&(\la_{3,1}-\la_{6,1})\la_{5,1}, \quad
\ov_{5,2}=\ov_{7,2}=0,\\
\ov_{5,3}&=&\ov_{7,3}=0.
\end{eqnarray*}
If $\la_{3,1}-\la_{6,1} \ne 0$ and $\ov_{1,j}=\ov_{3,j}=0$ for
$j=\overline{1,4}$ then
\begin{eqnarray*}
\ov_{1,4}&=&\la_{1,4},\\
\ov_{3,4}&=&(\la_{3,1}-\la_{6,1})\la_{5,3},\\
\ov_{5,4}&=&\la_{2,1}\la_{4,1}(\la_{3,1}-\la_{6,1})\left(\la_{4,1}+5\la_{3,1}-5\la_{6,1}\right),\\
\ov_{7,4}&=&\la_{6,0}^2\la_{2,1}\la_{4,1}(\la_{3,1}-\la_{6,1})^2.
\end{eqnarray*}
The essential order is $k^*=4$ and the essential parameters can be
chosen to be $\la_{1,4}$, $\la_{5,3}$, $\la_{2,1}$ and $\la_{4,1}$
taking $\la_{3,1}=1$. If $\la_{3,1}-\la_{6,1}=\la_{5,1}=\la_{5,2}=0$
and $\ov_{1,j}=0$ for $j=\overline{1,4}$ then
$\ov_{3,j}=\ov_{5,j}=\ov_{7,j}=0$ for $j=\overline{1,4}$ and
\begin{eqnarray*}
\ov_{1,5}&=&\la_{1,5},\\
\ov_{3,5}&=&(\la_{3,2}-\la_{6,2})\la_{5,3},\\
\ov_{5,5}&=&\la_{2,1}\la_{4,1}^2 (\la_{3,2}-\la_{6,2}),\\
\ov_{7,5}&=&0.
\end{eqnarray*}

\item[{\rm (x)}] Linear center: $\la_{1,0}=\la_{2,0}=\la_{3,0}=\la_{4,0}=\la_{5,0}=\la_{6,0}=0$. Then
$\ov_{1,1}= \la_{1,1}$ and $\ov_{3,1}=\ov_{5,1}=\ov_{7,1}=0$.
If $\ov_{1,1}=0$, then $\ov_{1,2}= \la_{1,2}$, $\ov_{3,2}= \la_{5,1}(\la_{3,1}-\la_{6,1})$  and $\ov_{5,2}=\ov_{7,2}=0$.

\medskip

\noindent If $\la_{3,1}-\la_{6,1} \ne 0$ and $\la_{1,2}=\la_{5,1}=0$, then $\ov_{1,2}=\ov_{3,2}=0$  and
$\ov_{1,3}= \la_{1,3}$, $\ov_{3,3}= \la_{5,2}(\la_{3,1}-\la_{6,1})$  and $\ov_{5,3}=\ov_{7,3}=0$.

\medskip

\noindent If $\la_{3,1}-\la_{6,1} \ne 0$ and $\la_{1,3}=\la_{5,2}=0$, then $\ov_{1,3}=\ov_{3,3}=0$  and
$\ov_{1,4}= \la_{1,4}$, $\ov_{3,4}= \la_{5,3}(\la_{3,1}-\la_{6,1})$,
$\ov_{5,4}=\la_{2,1}\la_{4,1}(\la_{3,1}-\la_{6,1})(\la_{4,1}+5(\la_{3,1}-\la_{6,1}))$ and $\ov_{7,4}=0$.

\medskip

\noindent If $(\la_{3,1}-\la_{6,1})\la_{2,1}\la_{4,1} \ne 0$, $\la_{1,4}=\la_{5,3}=0$, $\la_{4,1}=5(\la_{6,1}-\la_{3,1})$ then  $\ov_{1,4}=\ov_{3,4}=\ov_{5,4}=0$ and
$\ov_{1,5}= \la_{1,5}$, $\ov_{3,5}= \la_{5,4}(\la_{3,1}-\la_{6,1})$,
$\ov_{5,5}=\la_{2,1}\la_{4,1}(\la_{3,1}-\la_{6,1})(\la_{4,2}+5(\la_{3,2}-\la_{6,2}))$ and $\ov_{7,5}=0$.

\medskip

\noindent If $(\la_{3,1}-\la_{6,1})\la_{2,1}\la_{4,1} \ne 0$, $\la_{1,5}=\la_{5,4}=0$, $\la_{4,2}=5(\la_{6,2}-\la_{3,2})$ then $\ov_{1,5}=\ov_{3,5}=\ov_{5,5}=0$ and
\begin{eqnarray*}
\ov_{1,6}&=&\la_{1,6},\\
\ov_{3,6}&=&(\la_{3,1}-\la_{6,1})\la_{5,5},\\
\ov_{5,6}&=&\la_{2,1}(\la_{3,1}-\la_{6,1})^2 (\la_{4,3}+5(\la_{3,3}-\la_{6,3})),\\
\ov_{7,5}&=&\la_{2,1}(\la_{3,1}-\la_{6,1})^3 (\la_{2,1}^2-\la_{3,1}\la_{6,1}+2\la_{6,1}^2).
\end{eqnarray*}
The essential order is $k^*=6$ and the essential parameters can be chosen to be $\la_{1,6}$, $\la_{5,5}$, $\la_{4,3}$ and $\la_{2,1}$ taking $\la_{3,1}=\la_{6,1}+1$, $\la_{3,3}=\la_{6,3}$ and $\la_{6,1}=1/4$.

\end{itemize}
\end{lemma}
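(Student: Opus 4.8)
The plan is to reduce the entire statement to the explicit polynomial expressions for $v_1,v_3,v_5,v_7$ recalled before the lemma, together with Theorem \ref{thcj}. The first step is a translation: since the four Bautin functions $h^{2j+1}B_{2j+1}(h)$ are linearly independent, formula \eqref{eqMkj} shows that the condition $M_r(h)\equiv 0$ is equivalent to $\ov_{1,r}=\ov_{3,r}=\ov_{5,r}=\ov_{7,r}=0$, and that the order $k$ in \eqref{eqdisp} is exactly the smallest $r$ for which not all four coefficients $\ov_{2j+1,r}$ vanish. Hence all the computation takes place in the ring $\R[[\eps]]$: one substitutes $\la_i(\eps)=\sum_{\ell\ge 0}\la_{i,\ell}\eps^\ell$, with $\la_{i,0}$ constrained to lie on one of the center strata (a)--(d), into $v_{2j+1}(\la)$ and reads off the coefficients $\ov_{2j+1,r}$ via \eqref{eqve}. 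Every displayed formula for $\ov_{2j+1,k}$ is then produced by an order-by-order expansion of these four polynomials.

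For the generic strata (i)--(iv) I would exploit a ``unit factor'' phenomenon. In case (i), $\la_{5,0}\neq 0$ makes $\la_5(\eps)$ a unit in $\R[[\eps]]$, so $v_3(\la(\eps))=\la_5(\eps)\,(\la_3(\eps)-\la_6(\eps))$ vanishes modulo $\eps^k$ exactly when $\la_3(\eps)-\la_6(\eps)$ does, i.e. when $\la_{3,\ell}=\la_{6,\ell}$ for $\ell<k$; substituting this into $v_3,v_5,v_7$ and isolating the $\eps^k$-coefficient gives the stated expressions, the identity $\ov_{7,k}=0$ being forced by the factor $(\la_3-\la_6)^2$, whose $\eps$-order is then $\ge 2k>k$. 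Cases (ii)--(iv) run the same way, using instead the respective nonvanishing hypotheses ($\la_{4,0}(\la_{3,0}-\la_{6,0})\neq 0$ for (ii), $\la_{2,0}(\la_{3,0}-\la_{6,0})\neq 0$ for (iii), and the Darboux conditions for (iv)) to factor out units and to reduce the vanishing of the low-order Melnikov functions to the vanishing of the low-order coefficients of a single affine-linear expression in the $\la_{i,\ell}$; in each of these cases $\phi_k$ already achieves its maximal range at $k=1$.

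The strata (v)--(x) demand a genuine finite induction on the $\eps$-order. Here the center condition at $\eps=0$ pushes the leading factors of the $v_{2j+1}$ into strictly positive $\eps$-order, so nontrivial contributions first appear at order $k^*=2$ in (v)--(vii), $k^*=3$ in (viii), $k^*=4$ in (ix), and $k^*=6$ in (x). At each order one imposes $M_1\equiv\cdots\equiv M_{r-1}\equiv 0$, rewrites this (again by linear independence of the Bautin functions) as polynomial relations among the $\la_{i,\ell}$ already introduced, simplifies the four expansions accordingly, and extracts the $\eps^r$-coefficients. I expect the linear center (x) to be the main obstacle: since every $\la_{i,0}=0$ one must climb through the orders $r=1,\dots,6$, carrying forward at each stage the constraint produced by $M_r\equiv 0$ (typically $\la_{1,r}=0$ together with a bilinear relation among the $\la_{3,\cdot},\la_{5,\cdot},\la_{2,\cdot},\la_{4,\cdot}$), until at order $6$ the map $\phi_6$ finally reaches full range; the bookkeeping of which monomials in the $\la_{i,\ell}$ survive the successive reductions is the delicate part, the underlying algebra being routine.

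It remains to justify the essential order and essential parameters claimed in each case. For this I would compute, stratum by stratum, the image of the map $\phi_k$ of \eqref{eqmap}: from the displayed formulas the image of $\phi_{k^*}$ is the subset of $\R^{N+1}=\R^4$ cut out by the visible relations among $\ov_{1,k^*},\ov_{3,k^*},\ov_{5,k^*},\ov_{7,k^*}$ (for each stratum these amount to a few proportionalities or vanishings, such as $\ov_{7,\cdot}\equiv 0$, or $\ov_{7,\cdot}$ proportional to $\ov_{5,\cdot}$), which fixes its dimension. One then checks directly that for $k<k^*$ the image of $\phi_k$ is strictly contained in this set, and that no $k>k^*$ enlarges it, the latter because the coefficients $\ov_{2j+1,k}$, viewed as polynomials on $\Lambda_k$, obey the same universal relations for all sufficiently large $k$, these being dictated by the fixed factorization pattern of $v_1,v_3,v_5,v_7$. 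Finally, choosing a minimal set of coordinates that parametrize a submanifold of $\Lambda_{k^*}$ on which $\phi_{k^*}$ has maximal rank, and freezing the remaining coordinates at the convenient values indicated in the statement, yields exactly the stated essential parameters.
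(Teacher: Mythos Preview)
Your proposal is correct and follows essentially the same approach as the paper: substitute $\la_i(\eps)$ into the explicit Poincar\'{e}--Liapunov polynomials $v_1,v_3,v_5,v_7$, expand in $\eps$, and use the linear independence of the Bautin functions (via Theorem~\ref{thcj}) to reduce the vanishing of $M_r$ to the vanishing of the coefficients $\ov_{2j+1,r}$. Your ``unit factor'' language for the generic strata (i)--(iv) is exactly what the paper does when it writes factors such as $[\la_{5,0}(\la_{3,0}-\la_{6,0})+O(\eps)]$ and peels them off, and your order-by-order induction for (v)--(x) mirrors the paper's stepwise identification of the $\eps^r$-coefficients.

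One refinement worth flagging: your blanket claim that ``no $k>k^*$ enlarges'' the image of $\phi_{k^*}$ is not literally true in cases (viii) and (ix). There the image of $\phi_{k^*}$ is only \emph{dense} in $\R^4$ (it misses certain thin sets such as $\{d=0,\ c\neq 0\}$), and the paper explicitly exhibits higher-order Melnikov functions ($k=4,5$) whose coefficient vectors land in those gaps. The essential order is still declared to be $k^*$ because these gaps do not affect the maximal number of zeros attainable, but your argument should acknowledge that the containment of ranges holds only up to closure in these two cases, as the paper itself discusses after Theorem~\ref{th:ep2}.
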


\begin{proof}  The cases (i) and (ii)  were already proved in \cite{ChiJac}.\\

(iii) Note that $\ov_{3,j}=\ov_{5,j}=0$ for $j=\overline{0,\ k-1}$
means that $\la_{5,j}=\la_{4,j}=0$ for $j=\overline{0,\ k-1}$. Then
 \begin{eqnarray*}\overline v_3\left(\la(\eps)\right)&=&\left[(\la_{3,0}-\la_{6,0})+O(\eps)\right]\left[ \la_{5,k}\eps^k+O(\eps^{k+1})\right],\\
\overline v_5\left(\la(\eps)\right)&=&\left[\la_{2,0}(\la_{3,0}-\la_{6,0})(5\la_{3,0}-5\la_{6,0})+O(\eps)\right]\left[ \la_{4,k}\eps^k+O(\eps^{k+1})\right],\\
\overline v_7\left(\la(\eps)\right)&=&\left[\la_{2,0}(\la_{3,0}-\la_{6,0})^2
(\la_{3,0}\la_{6,0}-2\la_{6,0}^2-\la_{2,0}^2)+O(\eps)\right]\left[ \la_{4,k}\eps^k+O(\eps^{k+1})\right]
\end{eqnarray*}
and the expressions of $\ov_{3,k}$, $\ov_{5,k}$ and $\ov_{7,k}$ easily follow. It is not difficult to see that the range of the map \eqref{eqmap} is the same for each $k$, hence the essential order is $k^*=1$.\\

(iv) In this case the range of \eqref{eqmap} for $k=1$ is $\mathbb{R}^4$, the largest possible. Hence the essential order is $k^*=1$.\\

(v) In this case we have
\begin{eqnarray*}\overline v_3\left(\la(\eps)\right)&=&\left[\eps(\la_{3,1}-\la_{6,1})+O(\eps^2)\right]\left[ \eps \la_{5,1}+O(\eps^{2})\right], \\
\overline v_5\left(\la(\eps)\right)&=&\left[ \eps \la_{2,1}+O(\eps^{2})\right]\left[\eps(\la_{3,1}-\la_{6,1})+O(\eps^2)\right]\left[\la_{4,0}^2+O(\eps)\right], \\
\overline v_7\left(\la(\eps)\right)&=&\left[ \eps \la_{2,1}+O(\eps^{2})\right]\left[\eps(\la_{3,1}-\la_{6,1})+O(\eps^2)\right]^2\left[-\la_{4,0}\la_{3,0}^2+O(\eps)\right].
\end{eqnarray*}
The coefficient of $\eps$ in each of the above expressions is null, and it is easy to identify the coefficient of $\eps^2$.

In order to see that if $\ov_{5,j}=0$ for $j=0,...,k-1$ then $\ov_{7,k}=0$, we note that $\overline v_7\left(\la(\eps)\right)=\overline v_5\left(\la(\eps)\right)\left[\eps(\la_{3,1}-\la_{6,1})+O(\eps^2)\right](-\frac{\la_{3,0}^2}{\la_{4,0}}+O(\eps))$.
 We have that if  $\ov_{5,j}=0$ for $j=0,...,k-1$ then $\overline v_5\left(\la(\eps)\right)$ has order $k$ in $\eps=0$. Hence $\overline v_7\left(\la(\eps)\right)$  has at least order $k+1$ in $\eps=0$, meaning that $\ov_{7,k}=0$.

Since the image of \eqref{eqmap} for $k=2$ is $\mathcal{R}=\{
(a,b,c,0)\ :\ (a,b,c)\in\mathbb{R}^3 \}$, from what we showed above
we deduce that for  $k\neq 2$ the image of \eqref{eqmap} is either
equal or contained in $\mathcal{R}$.
 Taking all these into account we deduce that the essential order is $k^*=2$. \\

 (vi) In this case we have
\begin{eqnarray*}\overline v_3\left(\la(\eps)\right)&=&\left[(\la_{3,0}-\la_{6,0})+O(\eps)\right]\left[ \eps \la_{5,1}+\eps^2 \la_{5,2}+O(\eps^{2})\right], \\
\overline v_5\left(\la(\eps)\right)&=&\left[ \eps \la_{2,1}+O(\eps^{2})\right]\left[\eps\la_{4,1}+O(\eps^2)\right]\left[(\la_{3,0}-\la_{6,0})^2+O(\eps)\right], \\
\overline v_7\left(\la(\eps)\right)&=&\left[ \eps \la_{2,1}+O(\eps^{2})\right]\left[\eps\la_{4,1}+O(\eps^2)\right]
\\ & &
\left[(\la_{3,0}-\la_{6,0})^2(\la_{3,0}\la_{6,0}-2\la_{6,0}^2)+O(\eps)\right].
\end{eqnarray*}
Identifying the coefficients of $\eps$ and $\eps^2$ we obtain in the expressions of $\ov_{j,1}$ and $\ov_{j,2}$ for $j=1,3,5,7$ given in the statement.

Assume that $\ov_{5,j}=0$ for $j=\overline{1 ,\ k-1}$. Then, from the above expressions we deduce that $\la_2(\eps)\la_4(\eps)$ has order $k$ in $\eps=0$. Hence, if $i$ is such that $\la_2(\eps)=\eps^i\la_{2,i}+O(\eps^{i+1})$ then $\la_4(\eps)=\eps^{k-i}\la_{4,k-i}+O(\eps^{k-i+1})$ and the coefficient of $\eps^k$ in their product is $\la_{2,i}\la_{4,k-i}$. The expressions of $\ov_{5,k}$ and $\ov_{7,k}$ follow from the above considerations. \\

(vii) Taking $\la_{2,1}=1$ and $\la_{3,1}=0$ the range of the map \eqref{eqmap} for $k=2$ is $\mathbb{R}^4$, the largest possible. Hence indeed $k^*=2$ is the essential order and $\la_{1,2}, \la_{4,1}, \la_{5,2}, \la_{6,1}$ are the essential parameters.\\

(viii) In this case we have
\begin{eqnarray*}\ov_3(\la(\eps))&=&\left[(\la_{3,1}-\la_{6,1})\eps +O(\eps^2)\right]\left[ \la_{5,1}\eps+\la_{5,2}\eps^2+\la_{5,3}\eps^3+O(\eps^4)\right],\\
\ov_5\left(\la(\eps)\right)&=&\left[\la_{2,0}(\la_{3,1}-\la_{6,1}) (\la_{4,1}+5\la_{3,1}-5\la_{6,1})\eps^2 +O(\eps^3)\right]\\
&& \left[ \la_{4,1}\eps+\la_{4,2}\eps^2+O(\eps^3)\right],\\
\ov_7\left(\la(\eps)\right)&=&\left[\la_{2,0}(\la_{3,1}-\la_{6,1})^2 (-\la_{6,0}^2-\la_{2,0}^2)\eps^2 +O(\eps^3)\right]\\
&& \left[ \la_{4,1}\eps+\la_{4,2}\eps^2+O(\eps^3)\right].
\end{eqnarray*}
Identifying the coefficients of $\eps$, $\eps^2$, $\eps^3$, $\eps^4$
and $\eps^5$ we obtain the expressions of $\ov_{i,j}$ given in
the statement. If $\la_{3,1}-\la_{6,1}\neq 0$ and
$\la_{1,2}=\la_{5,1}=0$ then the closure of the range of the map
(\ref{eqmap}) for $k^*=3$ is $\mathbb{R}^4$. In fact the range is
$\mathbb{R}^4 \setminus \mathcal{R}_1 \cup  \mathcal{R}_2$ where
$\mathcal{R}_1= \{ (a,b,c,d) \in \mathbb{R}^4 \ | \ d \ne 0, \
(\la_{6,0}^2 +\la _{2,0}^2) c -5 d=0 \}$ and  $\mathcal{R}_2= \{
(a,b,c,d) \in \mathbb{R}^4 \ | \ c \ne 0, \ d=0 \}$. We continue our
study giving other cases that recover the gaps of the previous
range. If $\la_{3,1}-\la_{6,1}\neq 0$ and
$\ov_{1,j}=\ov_{3,j}=\ov_{5,j}=\ov_{7,j}=0$ for $j=1,2,3$ then the
range of the map (\ref{eqmap}) for $k^*=4$ is
$\mathcal{R}_1$.
If $\la_{3,1}-\la_{6,1}= \la_{5,1}=\la_{3,2}-\la_{6,2}=0$ and $\ov_{1,j}=\ov_{3,j}=\ov_{5,j}=\ov_{7,j}=0$
for $j=\overline{1,4}$ then the range of the map (\ref{eqmap}) for $k^*=5$ is $\mathcal{R}_2$.\\

(ix) By the same reasonings of the preceding cases we obtain the
expressions of $\ov_{i,j}$ given in the statement. If
$\la_{3,1}-\la_{6,1} \ne 0$ and $\ov_{1,j}=\ov_{3,j}=0$ for
$j=\overline{1,4}$ then the closure of the range of the map
(\ref{eqmap}) for $k^*=3$ is $\mathbb{R}^4$. In fact the range is
$\mathbb{R}^4 \setminus \mathcal{R}_1$ where $\mathcal{R}_1= \{
(a,b,c,d) \in \mathbb{R}^4 \ | \ c \ne 0, \ d=0 \}$. We continue our
study giving the case that recover the gap of the previous range. If
$\la_{3,1}-\la_{6,1}= \la_{5,1}=\la_{5,2}=0$ and $\ov_{1,j}=0$ for
$j=\overline{1,4}$ then $\ov_{3,j}=\ov_{5,j}=\ov_{7,j}=0$ for
$j=\overline{1,4}$ and the range of the map (\ref{eqmap}) for
$k^*=5$ is
$\mathcal{R}_1$.\\

(x) In this case we have
\begin{eqnarray*}\ov_3(\la(\eps))&=&\left[(\la_{3,1}-\la_{6,1})\eps +O(\eps^2)\right]\\  &&
\left[ \la_{5,1}\eps+\la_{5,2}\eps^2+\la_{5,3}\eps^3+\la_{5,4}\eps^4+\la_{5,5}\eps^5+O(\eps^6)\right],\\
\ov_5\left(\la(\eps)\right)&=&\left[\la_{2,1}\eps +O(\eps^2)\right]\left[\la_{4,1}\eps +O(\eps^2)\right]
\left[(\la_{3,1}-\la_{6,1})\eps +O(\eps^2)\right]\\ &&
\left[(\la_{4,1}+5(\la_{3,1}-\la_{6,1}))\eps+(\la_{4,2}+5(\la_{3,2}-\la_{6,2}))\eps^2 \right. \\ &&
+ \left. (\la_{4,3}+5(\la_{3,3}-\la_{6,3}))\eps^3 +O(\eps^4)\right],\\
\ov_7\left(\la(\eps)\right)&=&\left[\la_{2,1}\eps +O(\eps^2)\right]\left[\la_{4,1}\eps +O(\eps^2)\right]
\left[(\la_{3,1}-\la_{6,1})\eps +O(\eps^2)\right]\\
&& \left[ (\la_{3,1}\la_{6,1}- 2 \la_{6,1}^2 - \la_{2,1}^2) \eps^2 +O(\eps^3)\right].
\end{eqnarray*}
Identifying the coefficients of $\eps$, $\eps^2$, $\eps^3$,
$\eps^4$, $\eps^5$ and $\eps^6$ we obtain the expressions of
$\ov_{i,j}$ given in the statement. If
$(\la_{3,1}-\la_{6,1})\la_{2,1}\la_{4,1} \ne 0$ and $\la_{1,j}=0$
for $j=\overline{1,5}$, $\la_{5,j}=0$ for $j=\overline{1,4}$,
$\la_{4,j}=5 (\la_{6,j}-\la_{3,j})$ for $j=1,2$, the range of the
map (\ref{eqmap}) for $k^*=6$ is $\R^4$, the largest possible. Hence
indeed $k^*=6$ is the essential order and $\la_{1,6}$, $\la_{5,5}$,
$\la_{4,3}$ and $\la_{2,1}$ are the essential parameters.
\end{proof}

\begin{theorem}\label{th:ep2}
The essential perturbations and the essential Melnikov function
are:
\begin{itemize}
\item[{\rm (i)}] Generic Lotka--Volterra center: $\la_{1,0}=\la_{3,0}-\la_{6,0}=0$ and $\la_{5,0}\neq 0$

\begin{equation*}\label{eq:gLV}
\begin{array}{ll}
\dot{x}=-y-\la_{6,0}x^2+\left(2\la_{2,0}+\la_{5,0}\right)xy+\la_{6,0}y^2+\eps\left(  \la_{1,1}x+\la_{6,1}y^2\right),\\
\dot{y}=x+\la_{2,0}x^2+\left(2\la_{6,0}+\la_{4,0}\right)xy-\la_{2,0}y^2
+ \eps \la_{1,1}y.
\end{array}
\end{equation*}
The corresponding essential Melnikov function is the first one and it has the form $$M_1(h)=\la_{1,1} h B_1(h)+\la_{6,1} h^3 \tilde B_3(h).$$

\item[{\rm (ii)}] Generic symmetric center: $\la_{1,0}=\la_{2,0}=\la_{5,0}=0$, $\la_{4,0}(\la_{3,0}-\la_{6,0})\neq 0$ and
$(\la_{4,0}+5\la_{3,0}-5\la_{6,0})^2+(\la_{3,0}\la_{6,0}-2\la_{6,0}^2)^2\neq 0$.

\begin{equation*}\label{eq:gSy}
\begin{array}{ll}
\dot{x}=-y-\la_{3,0}x^2+\la_{6,0}y^2+\eps\left(  \la_{1,1}x+\left(2\la_{2,1}+\la_{5,1}\right)xy\right),\\
\dot{y}=x+\left(2\la_{3,0}+\la_{4,0}\right)xy+ \eps \left(
\la_{1,1}y+\la_{2,1}x^2 -\la_{2,1}y^2\right).
\end{array}
\end{equation*}
The corresponding essential Melnikov function is the first one and it has the form, when $\la_{4,0} + 5 \la_{3,0} -5 \la_{6,0} \neq 0$,
$$M_1(h)=\la_{1,1}h B_1(h)+\la_{5,1} h^3 B_3(h) + \la_{2,1}  h^5 \tilde B_5(h), $$ and, when $\la_{4,0} + 5 \la_{3,0} -5 \la_{6,0} = 0$,
$$M_1(h)=\la_{1,1}h B_1(h)+\la_{5,1} h^3 B_3(h) + \la_{2,1}  h^7 B_7(h).$$

\item[{\rm (iii)}] Generic Hamiltonian center: $\la_{1,0}=\la_{4,0}=\la_{5,0}=0$ and $\la_{2,0}(\la_{3,0}-\la_{6,0})\neq
0$

\begin{equation*}\label{eq:gH}
\begin{array}{ll}
\dot{x}=-y-\la_{3,0}x^2+2\la_{2,0}xy+\la_{6,0}y^2+\eps\left(   \la_{1,1}x +\la_{5,1}xy\right),\\
\dot{y}=x+\la_{2,0}x^2+2\la_{3,0}xy-\la_{2,0}y^2 +\eps\left( \la_{1,1}y
+\la_{4,1}xy\right).
\end{array}
\end{equation*}
The corresponding essential Melnikov function is the first one and it has the form
$$M_1(h)=\la_{1,1} h B_1(h) + \la_{5,1} h^3 B_3(h) + \la_{4,1}  h^5 \tilde B_5(h).$$

\item[{\rm (iv)}] Generic Darboux center: $\la_{1,0}=\la_{5,0}=\la_{4,0}+5\la_{3,0}-5\la_{6,0}=\la_{3,0}\la_{6,0}-2\la_{6,0}^2-\la_{2,0}^2=0$ and
$\la_{2,0}\la_{4,0}(\la_{3,0}-\la_{6,0})\neq 0$

\begin{equation*}\label{eq:gD}
\begin{array}{ll}
\dot{x}=-y-\la_{3,0}x^2+2\la_{2,0}xy+\la_{6,0}y^2+\eps\left(  \la_{1,1}x+(2\la_{2,1}+\la_{5,1})xy\right),\\
\dot{y}=x+\la_{2,0}x^2+\left(7\la_{6,0}-5\la_{3,0}\right)xy-\la_{2,0}y^2
+ \eps \left(\la_{1,1}y+\la_{4,1}xy\right).
\end{array}
\end{equation*}
where $(\la_{3,0}\la_{6,0}-2\la_{6,0}^2-\la_{2,0}^2)=0$.
The corresponding essential Melnikov function is the first one and it has the form
$$M_1(h)=\la_{1,1}h B_1(h)+\la_{5,1} h^3 B_3(h) + \la_{4,1}  h^5 B_5(h)+ \la_{2,1} h^7 B_7(h) .$$

\item[{\rm (v)}] Symmetric Lotka--Volterra center: $\la_{1,0}=\la_{2,0}=\la_{5,0}=\la_{3,0}-\la_{6,0}=0$ and  $\la_{4,0}\neq 0$

\begin{equation*}\label{eq:SLV}
\begin{array}{ll}
\dot{x}=-y-\la_{6,0}x^2+\la_{6,0}y^2+\eps\left(  2\la_{2,1}+\la_{5,1}\right)xy+\eps^2\la_{1,2}x,\\
\dot{y}=x+\left(2\la_{6,0}+\la_{4,0}\right)xy + 2\eps xy+\eps \la_{2,1}
\left(x^2-y^2\right)+\eps^2\la_{1,2}y.
\end{array}
\end{equation*}
The corresponding essential Melnikov function is the second one and it has the form
$$M_2(h)=\la_{1,2} h B_1(h) + \la_{5,1} h^3 B_3(h) + \la_{2,1}  h^5 B_5(h) .$$

\item[{\rm (vi)}] Symmetric Hamiltonian center: $\la_{1,0}=\la_{2,0}=\la_{4,0}=\la_{5,0}=0$ and  $\la_{3,0}-\la_{6,0}\neq 0$

\begin{equation*}\label{eq:SH}
\begin{array}{ll}
\dot{x}=-y-\la_{3,0}x^2+\la_{6,0}y^2+\eps \ 2xy+\eps^2 \left( \la_{1,2}x +\la_{5,2}xy\right),\\
\dot{y}=x+2\la_{3,0}xy + \eps
\left(x^2+\la_{4,1}xy-y^2\right)+\eps^2\la_{1,2}y.
\end{array}
\end{equation*}
The corresponding essential Melnikov function is the second one and it has the form
$$M_2(h)=\la_{1,2}h B_1(h)+\la_{5,2} h^3 B_3(h) + \la_{4,1} h^5 \tilde B_5(h).$$

\item[{\rm (vii)}] Symmetric Darboux center: $\la_{1,0}=\la_{2,0}=\la_{5,0}=\la_{4,0}+5\la_{3,0}-5\la_{6,0}=\la_{3,0}\la_{6,0}-2\la_{6,0}^2=0$ and $\la_{4,0}\neq 0$

\begin{equation*}\label{eq:SD}
\begin{array}{ll}
\dot{x}=-y-\la_{3,0}x^2+\la_{6,0}y^2+\eps\left(  2xy+\la_{6,1}y^2\right)+\eps^2\left(\la_{1,2}x+\la_{5,2}xy\right),\\
\dot{y}=x+\left(5\la_{6,0}-3\la_{3,0}\right)xy + \eps
\left(x^2+\la_{4,1}xy-y^2\right)+\eps^2\la_{1,2}y.
\end{array}
\end{equation*}
The corresponding essential Melnikov function is the second one and it has the form
$$M_2(h)=\la_{1,2} h B_1(h)+\la_{5,2} h^3 B_3(h)+\la_{4,1}  h^5 B_5(h)+ \la_{6,1} h^7 B_7(h).$$

\item[{\rm (viii)}] Lotka--Volterra Hamiltonian center: $\la_{1,0}=\la_{4,0}=\la_{5,0}=\la_{3,0}-\la_{6,0}=0$ and $\la_{2,0}\neq 0$
\begin{equation*}\label{eq:LVH1}
\begin{array}{ll}
\dot{x}=-y-\la_{6,0}x^2+2\la_{2,0}xy+\la_{6,0}y^2-\eps \la_{3,1}x^2+\eps^2\la_{5,2}xy+\eps^3\ \la_{1,3}x, \\
\dot{y}=x+\la_{2,0}x^2+2\la_{6,0}xy-\la_{2,0}y^2 +\eps( 2\la_{3,1}
+\la_{4,1})xy+\eps^3\ \la_{1,3}y .
\end{array}
\end{equation*}
The corresponding essential Melnikov function is the third one and it has the form
$$\begin{array}{lll} \displaystyle M_3(h) & = & \displaystyle \la_{1,3} h B_1(h) + \la_{3,1} \la_{5,2} h^3 B_3(h)+\la_{3,1}(\la_{4,1}+5\la_{3,1})\la_{4,1}h^5 B_5(h) \\ & & \displaystyle+ \la_{3,1}^2\la_{4,1}h^7 B_7(h).\end{array} $$

\item[{\rm (ix)}] Symmetric Lotka--Volterra Hamiltonian center (or Hamiltonian
triangle): $\la_{1,0}=\la_{2,0}=\la_{4,0}=\la_{5,0}=\la_{3,0}-\la_{6,0}=0$ and  $\la_{6,0}\neq 0$
\begin{equation*}\label{eq:SLVH}
\begin{array}{ll}
\dot{x}=-y-\la_{6,0}x^2+\la_{6,0}y^2-\eps (x^2+2\la_{2,1}xy)+\eps^3\  \la_{5,3}xy+\eps^4\ \la_{1,4}x,\\
\dot{y}=x+2\la_{6,0}xy +\eps\left( \la_{2,1}x^2+( -2
+\la_{4,1})xy-\la_{2,1}y^2\right)+\eps^4\ \la_{1,4}y .
\end{array}
\end{equation*}
The corresponding essential Melnikov function is the fourth one and it has the form
$$M_4(h)=\la_{1,4} h B_1(h)+\la_{5,3} h^3 B_3(h)+\la_{2,1}(\la_{4,1}+5)\la_{4,1}h^5 B_5(h)+ \la_{2,1}\la_{4,1}h^7 B_7(h).$$

\item[{\rm (x)}] Linear center: $\la_{1,0}=\la_{2,0}=\la_{3,0}=\la_{4,0}=\la_{5,0}=\la_{6,0}=0$
\begin{equation*}\label{eq:lin}
\begin{array}{ll}
\dot{x}=-y+ \eps(-5x^2 + y^2 + 8 \la_{2,1}xy )/4 + \eps^5 \la_{5,5}xy +\eps^6 \la_{1,6}x,\\
\dot{y}=x+\eps(-5xy + 2 \la_{2,1}(x^2-y^2))+\eps^3 \la_{4,3}xy + \eps^6 \la_{1,6} y.
\end{array}
\end{equation*}
The corresponding essential Melnikov function is the sixth one and it has the form
$$M_6(h)=\la_{1,6}h B_1(h)+\la_{5,5} h^3 B_3(h)+\la_{2,1}\la_{4,3} h^5 B_5(h)+ \la_{2,1}(16\la_{2,1}^2-3)h^7 B_7(h).$$

\end{itemize}
\end{theorem}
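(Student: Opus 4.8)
The plan is to prove Theorem \ref{th:ep2} as a case-by-case consequence of Lemma \ref{lem:CJlike} and the structural formula \eqref{eqMkj} of Theorem \ref{thcj}; no new idea is needed, only a careful organization of the data already produced. Fix one of the ten positions of the point $(\la_{1,0},\dots,\la_{6,0})$ in the center variety of \eqref{eq:qsB0}. Lemma \ref{lem:CJlike} supplies the essential order $k^*$, a choice of essential parameters, and the explicit expressions of the four coefficients $\ov_{1,k^*},\ov_{3,k^*},\ov_{5,k^*},\ov_{7,k^*}$ as polynomials in the $\la_{i,\ell}$. The same lemma shows, along the way, that with such a choice $\ov_{2j+1,r}=0$ for all $j$ and all $r=\overline{1,k^*-1}$, so $M_1\equiv\dots\equiv M_{k^*-1}\equiv 0$ and $M_{k^*}$ is the first nonvanishing Melnikov function; that the range of the map $\phi_{k^*}$ of \eqref{eqmap} is maximal among the ranges of all $\phi_k$; and that the listed essential parameters are the fewest realizing this maximal range. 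By the discussion in Section \ref{sect2} together with Theorem \ref{thbifur}(iv), this is precisely what items (i)--(iii) of the definition of essential perturbation demand, so the perturbations recorded in the statement are legitimate essential perturbations.

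It remains to exhibit, in each item, the perturbed system and the Melnikov function in closed form. The explicit system is obtained by substituting into the Bautin normal form \eqref{eq:qsB0} the series $\la_i(\eps)=\sum_{\ell\ge 0}\la_{i,\ell}\eps^\ell$ in which the essential parameters are kept as free symbols, the non-essential parameters that must carry a prescribed nonzero value (for instance $\la_{3,1}=1$ in case (v), $\la_{2,1}=1$ in cases (vi) and (vii), and $\la_{3,1}=\la_{6,1}+1$, $\la_{3,3}=\la_{6,3}$, $\la_{6,1}=1/4$ in case (x)) are fixed to those values, and every other non-essential $\la_{i,\ell}$ is set to zero; collecting the coefficients of $x^2,xy,y^2,x,y$ then yields the displayed differential systems.

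The essential Melnikov function is produced by inserting the specialized coefficients $\ov_{2j+1,k^*}$ from Lemma \ref{lem:CJlike} into \eqref{eqMkj}. Whenever several of these coefficients share a common factor $f$ which, for the configuration at hand, is a nonzero constant depending only on the $\la_{i,0}$ (such as $\la_{2,0}\la_{4,0}^2$ in case (i) or $\la_{4,0}(\la_{3,0}-\la_{6,0})$ in case (vii)), the corresponding terms are grouped and $f$ is absorbed into a rescaled Bautin function $\tilde B_{2j+1}(h)$, which is again analytic on $[0,h_1)$ with $\tilde B_{2j+1}(0)\ne 0$. This is why $\tilde B_3$ appears in case (i), $\tilde B_5$ in cases (ii), (iii) and (vi), and so on, while the unmodified $B_{2j+1}$ are used when no such grouping occurs. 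One also notes that $\ov_{1,k^*}=\la_{1,k^*}$ is always an independent essential parameter, because $v_1(\la)=\la_1$, so the term $\la_{1,k^*}\,hB_1(h)$ invariably occurs.

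The substitution-and-collection step is routine; the part that really requires care---and where the particular perturbations in the statement get pinned down---is the higher-order cases (v)--(x), above all the linear center (x) with $k^*=6$. There one must follow the entire chain of successive vanishing conditions $\ov_{2j+1,r}=0$, $r=\overline{1,5}$, and check that the prescribed special values of the non-essential parameters do not shrink the range of $\phi_6$: explicitly, that as $\la_{1,6},\la_{5,5},\la_{4,3},\la_{2,1}$ vary, the quadruple $(\ov_{1,6},\ov_{3,6},\ov_{5,6},\ov_{7,6})$ still sweeps out all of $\R^4$ under the constraints $\la_{3,1}=\la_{6,1}+1$, $\la_{3,3}=\la_{6,3}$, $\la_{6,1}=1/4$. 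This is exactly the range computation carried out at the end of the proof of Lemma \ref{lem:CJlike}, and the analogous but simpler verifications handle cases (v)--(ix); these range checks are the main obstacle of the argument.
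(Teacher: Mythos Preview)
Your proposal is correct and matches the paper's approach: the paper does not give a separate proof of Theorem~\ref{th:ep2} but treats it as an immediate consequence of Lemma~\ref{lem:CJlike} together with the structural formula~\eqref{eqMkj}, exactly as you describe. Your explanation of how the explicit systems are obtained by specializing the non-essential $\la_{i,\ell}$ in the Bautin form, and of how constant nonzero factors in the $\ov_{2j+1,k^*}$ are absorbed into the rescaled functions $\tilde B_{2j+1}$, makes explicit what the paper leaves to the reader.
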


We consider that the cases {\rm (viii)} and {\rm (ix)} in the above theorem require a discussion. Note that, in the case {\rm (viii)} the coefficients of the Bautin functions which form $M_3(h)$ vary in some set which is dense in $\mathbb{R}^4$, but it is not the whole $\mathbb{R}^4$ (for details one might see the proof of Lemma 5 (viii)). Anyway, for well chosen values of the parameters there are Melnikov functions whose coefficients vary in the complement of the range  of the coefficients of $M_3(h)$. Hence, to study the cyclicity of the period annulus, one can identify the Bautin functions from the expression of $M_3(h)$ and study the zeros of any linear combination of these functions. In the case that the maximum number of zeros (counted with multiplicity) is realized by simple zeros, the cyclicity is found. Otherwise, one can find an upper bound of the cyclicity, but the determination of its exact value, as it is known, is a complicated problem. The same discussion is valid for the case {\rm (ix)}.

\subsection{Essential perturbations of linear centers with cubic nonlinearities}

As it was proved by K.S. Sibirsky \cite{Sib}, see also \cite{Sch} and the references therein, by an affine change of coordinates, any cubic homogeneous center
can be written
\begin{equation} \label{cubic} \begin{array}{lll} \displaystyle \dot{x} & = & \displaystyle - y + \lambda x -
(\omega+\theta-a)x^3-(\eta-3\mu)x^2y \\ & & \displaystyle -
(3\omega-3\theta+2a-\xi)xy^2-(\mu-\nu)y^3,
\\ \displaystyle \dot{y} & = & \displaystyle x + \lambda y +
(\mu+\nu)x^3+(3\omega+3\theta+2a)x^2y \\ & & \displaystyle
+(\eta-3\mu)xy^2+(\omega-\theta-a)y^3,
\end{array}\end{equation} where $\lambda$, $\omega$, $\theta$, $a$, $\eta$, $\mu$, $\xi$,
$\nu$ are real parameters.

It can be shown that this family has the following set of
Poincar\'e--Liapunov constants:
\[ v_1 \, = \, \lambda, \ v_3 \, = \, \xi, \ v_5 \, = \, \nu a, \
v_7 \, = \, \omega \theta a, \ v_9 \, = \, \theta a^2 \eta, \]
\[ v_{11} \, = \, \theta \left[4(\mu^2+\theta^2)-a^2\right] a^2. \]

The center cases of system (\ref{cubic}) are the following:
\begin{itemize}
\item[{\rm (I)}] Hamiltonian: $\lambda\, = \, \xi \, = \, a \, = \, 0$;
\item[{\rm (II)}] Symmetric: $\lambda\, = \, \xi \, = \, \nu \, = \,  \theta  \, = \, 0$;
\item[{\rm (III)}] Darboux: $\lambda \, = \, \xi \, = \, \nu \, = \, \omega \, = \, \eta \, = \, \left[4(\mu^2+\theta^2)-a^2\right]  \, = \, 0$.
\end{itemize}

Analogously to the previous subsection, next lemma provides the expressions of the
coefficients of the Melnikov functions, the essential order and the
essential parameters for all possible positions of a point in the center variety. This lemma is
followed by a theorem which gives the essential perturbation and the
essential Melnikov function in each situation.

\begin{lemma} \label{lemcubic}
For any integer $k>0$, the following statements hold.
\begin{itemize}
\item[{\rm (i)}] Generic Hamiltonian center: $\lambda_0 \, = \, a_0 \, = \, \xi_0 \, = \, 0$ and $\nu_0^2+\theta_0^2 \ne 0$.
\begin{itemize}

\smallskip
\item Case 1: $\nu_0^2+\omega_0^2 \ne 0$.

If $\bar{v}_{1,j} \, = \, \bar{v}_{3,j} \, = \, \bar{v}_{5,j} \, = \, \bar{v}_{7,j} \, = \, 0$ for $j=\overline{0, k-1}$ then
    \[ \begin{array}{lll}
    \bar{v}_{1,k} & = & \lambda_{k}, \\ \bar{v}_{3,k} & = & \xi_{k}, \\ \bar{v}_{5,k} & = & \nu_0 \, a_k, \\ \bar{v}_{7,k} & = & \omega_0 \theta_0 \, a_k, \\ \bar{v}_{9,k} & = & \bar{v}_{11,k}  \, = \, 0.
    \end{array} \]
The essential order is $k^*=1$ and the essential parameters can be chosen to be $\la_1,\xi_1,a_1$.

\smallskip

\item Case 2:  $\nu_0 = \omega_0 = 0$.

Then
\[ \begin{array}{lll} \bar{v}_{1,1} & = & \lambda_{1},  \\ \bar{v}_{3,1} & = & \xi_1, \\ \bar{v}_{5,1} & = & \bar{v}_{7,1} \, = \, \bar{v}_{9,1} \, = \, \bar{v}_{11,1} \, = \, 0. \end{array} \]
    If $\bar{v}_{1,1} \, = \, \bar{v}_{3,1} \, = \, 0$ then
    \[ \begin{array}{lll}
    \bar{v}_{1,2} & = & \lambda_{2}, \\ \bar{v}_{3,2} & = & \xi_{2}, \\ \bar{v}_{5,2} & = & a_{1} \nu_{1}, \\ \bar{v}_{7,2} & = & \theta_0 \, a_{1} \omega_{1}, \\ \bar{v}_{9,2} & = & \eta_0 \theta_0 \, a_{1}^2, \\ \bar{v}_{11,2} & = & 4 \theta_0 (\mu_0^2+ \theta_0^2) \, a_{1}^2.
    \end{array} \]

If $\bar{v}_{1,j} \, = \, \bar{v}_{3,j} \, = \, \bar{v}_{5,j} \, =
\, \bar{v}_{7,j} \, = \, \bar{v}_{9,j} \, = \, \bar{v}_{11,j} \, =
\, 0$ for $j=\overline{1, \ k-1}$ with $k \geq 3$, and
\begin{itemize}
\item[$\bullet$] $k$ is even, then
\[ \begin{array}{lll}
    \bar{v}_{1,k} & = & \lambda_{k}, \\ \bar{v}_{3,k} & = & \xi_{k}, \\ \bar{v}_{5,k} & = & a_{i} \nu_{k-i}, \\ \bar{v}_{7,k} & = & \theta_0 \, a_{i} \omega_{k-i}, \\ \bar{v}_{9,k} & = & \eta_0 \theta_0 \, a_{k-1}^2, \\ \bar{v}_{11,k} & = & 4 \theta_0 (\mu_0^2+ \theta_0^2) \, a_{k-1}^2.
    \end{array} \]
\item[$\bullet$] $k$ is odd, then
\[ \begin{array}{lll}
    \bar{v}_{1,k} & = & \lambda_{k}, \\ \bar{v}_{3,k} & = & \xi_{k}, \\ \bar{v}_{5,k} & = & a_{i} \nu_{k-i}, \\ \bar{v}_{7,k} & = & \theta_0 \, a_{i} \omega_{k-i}, \\ \bar{v}_{9,k} & = & \bar{v}_{11,k} \,  = \, 0.
    \end{array} \]
\end{itemize}
\end{itemize}
The essential order is $k^*=2$ and the essential parameters can be chosen to be $\la_2,\xi_2,a_1,\nu_1,\omega_1$.\\

\item[{\rm (ii)}] Generic Symmetric center: $\lambda_{0} \, = \, \xi_0 \, = \, \nu_0 \, = \, \theta_0 \, = \, 0$, $a_0 \neq 0$ and
$\omega_0^2+\eta_0^2+ (4\mu_0^2-a_0^2)^2 \ne 0$.

If $\bar{v}_{1,j} = \bar{v}_{3,j} = \bar{v}_{5,j} =  \bar{v}_{7,j} =  \bar{v}_{9,j} =  \bar{v}_{11,j} =  0$ for $j=\overline{0,k-1}$, then
    \[ \begin{array}{lll}
    \bar{v}_{1,k} & = & \lambda_{k}, \\ \bar{v}_{3,k} & = & \xi_{k}, \\ \bar{v}_{5,k} & = & a_{0} \nu_{k}, \\ \bar{v}_{7,k} & = & a_0 \omega_0 \, \theta_{k}, \\ \bar{v}_{9,k} & = & a_0^2 \eta_0 \theta_{k}, \\ \bar{v}_{11,k} & = & a_0^2(4\mu_0^2-a_0^2) \theta_k.
    \end{array} \]
The essential order is $k^*=1$ and the essential parameters can be chosen to be $\la_1,\xi_1,\nu_1,\theta_1$.\\

\item[{\rm (iii)}] Generic Darboux center: $\lambda_{0} = \xi_0 = \nu_0 = \omega_0 = \eta_0 = 4(\mu_0^2+\theta_0^2)-a_0^2 \, = \, 0$, $a_0 \ne 0$ and $\theta_0 \ne 0$.

    Then
    \[ \begin{array}{lll}
    \bar{v}_{1,1} & = & \lambda_{1}, \\ \bar{v}_{3,1} & = & \xi_{1}, \\ \bar{v}_{5,1} & = & a_{0} \, \nu_{1}, \\ \bar{v}_{7,1} & = &  a_{0} \, \theta_0 \, \omega_{1}, \\ \bar{v}_{9,1} & = & a_0^2 \theta_{0} \eta_1, \\
    \bar{v}_{11,1} &  = & 2a_0^2 \theta_{0} (4\mu_0\mu_1+4\theta_0\theta_1-a_0a_1).
    \end{array} \]
The essential order is $k^*=1$ and the essential parameters can be chosen to be $\la_1,\xi_1,\nu_1,\omega_1,\eta_1,a_1$.\\

\item[{\rm (iv)}] Hamiltonian symmetric center: $\la_0=\xi_0=a_0=\nu_0=\theta_0=0$ and $\omega_0^2+\eta_0^2+\mu_0^2 \ne 0$.

Then
\[ \begin{array}{lll} \bar{v}_{1,1} & = & \lambda_{1},  \\ \bar{v}_{3,1} & = & \xi_1, \\ \bar{v}_{5,1} & = & \bar{v}_{7,1} \, = \, \bar{v}_{9,1} \, = \, \bar{v}_{11,1} \, = \, 0. \end{array} \]
    If $\bar{v}_{1,1} \, = \, \bar{v}_{3,1} \, = \, 0$
\[ \begin{array}{lll}
    \bar{v}_{1,2} & = & \lambda_{2}, \\ \bar{v}_{3,2} & = & \xi_{2}, \\ \bar{v}_{5,2} & = & a_{1} \, \nu_{1}, \\ \bar{v}_{7,2} & = & \omega_0 \, a_{1}  \theta_{1}, \\ \bar{v}_{9,2} & = & \bar{v}_{11,2} \,  = \, 0.
    \end{array} \]
If $\bar{v}_{1,j} \, = \, \bar{v}_{3,j} \, = \, \bar{v}_{5,j} \, = \, \bar{v}_{7,j} \, = \, \bar{v}_{9,j} \, = \, \bar{v}_{11,j} \, = \, 0$ for $j=\overline{1,k-1}$ with $k \geq 3$, then $\bar{v}_{9,k} = \bar{v}_{11,k}  =  0$.

The essential order is $k^*=2$ and the essential parameters can be chosen to be $\la_2,\xi_2,\nu_1,\theta_1$ taking $a_1=1$.\\

\item[{\rm (v)}] Symmetric Darboux center: $\lambda_{0} = \xi_0 = \nu_0 = \theta_0 = \omega_0=\eta_0=4 \mu_0^2-a_0^2= 0$ and $a_0 \neq 0$.
Then
\[ \begin{array}{lll} \bar{v}_{1,1} & = & \lambda_{1}, \\ \bar{v}_{3,1} & = & \xi_1, \\ \bar{v}_{5,1} & = & a_0 \nu_1, \\ \bar{v}_{7,1} & = & \bar{v}_{9,1} \, = \, \bar{v}_{11,1} \, = \, 0. \end{array} \]
If $\bar{v}_{1,1} \, = \, \bar{v}_{3,1} \, = \, \bar{v}_{5,1} \, = \, 0$ then
\[ \begin{array}{lll}
    \bar{v}_{1,2} & = & \lambda_{2}, \\ \bar{v}_{3,2} & = & \xi_{2}, \\ \bar{v}_{5,2} & = & a_{0} \, \nu_{2}, \\ \bar{v}_{7,2} & = &  a_{0} \, \theta_1 \, \omega_{1}, \\ \bar{v}_{9,2} & = & a_0^2 \theta_{1} \eta_1, \\
    \bar{v}_{11,2} &  = & a_0^2 \theta_{1} (8 \mu_0 \mu_1 - 2 a_0 a_1).
    \end{array} \]

The essential order is $k^*=2$ and the essential parameters can be chosen to be $\la_2,\xi_2,\nu_2,\omega_1,\eta_1,a_1$ taking $\theta_1=1$.\\

\item[{\rm (vi)}] Linear center: $\la_0=\xi_0=a_0=\nu_0 = \theta_0 = \omega_0 = \eta_0  = \mu_0 = 0$.

If $\bar{v}_{1,j} \, = \, \bar{v}_{3,j} \, = \, \bar{v}_{5,j} \, = \, \bar{v}_{7,j} \, = \, \bar{v}_{9,j} \, = \, \bar{v}_{11,j} \, = \, 0$ for $j=0,1,2,3,4$, then
\[ \begin{array}{lll}
    \bar{v}_{1,5} & = & \lambda_{5}, \\ \bar{v}_{3,5} & = & \xi_{5}, \\ \bar{v}_{5,5} & = & a_{1} \, \nu_{4}, \\ \bar{v}_{7,5} & = &  a_{1} \, \theta_1 \, \omega_{3}, \\ \bar{v}_{9,5} & = & a_1^2 \theta_{1} \eta_2, \\
    \bar{v}_{11,5} &  = & a_1^2 \theta_{1} (4(\mu_1^2+\theta_1^2)-a_1^2).
    \end{array} \]
The essential is $k^*=5$ and the essential parameters can be chosen
to be $\la_5, \xi_5,\nu_4,\omega_3,\eta_2,\theta_1$ taking $a_1=1$.

\end{itemize}
\end{lemma}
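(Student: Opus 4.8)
The plan is to repeat, in this larger setting, the argument used for Lemma~\ref{lem:CJlike}, now with the six Poincar\'e--Liapunov constants $v_1=\lambda$, $v_3=\xi$, $v_5=\nu a$, $v_7=\omega\theta a$, $v_9=\theta a^2\eta$, $v_{11}=\theta[4(\mu^2+\theta^2)-a^2]a^2$ of the cubic homogeneous family and the three center components (I)--(III). Here $N=5$, so Theorem~\ref{thcj} gives, whenever $M_r(h)\equiv 0$ for $r=\overline{1,k-1}$,
\[
M_k(h)=\sum_{j=0}^{5}\bar v_{2j+1,k}\,h^{2j+1}B_{2j+1}(h),
\]
with the six functions $h^{2j+1}B_{2j+1}(h)$ linearly independent and $\bar v_{2j+1,k}$ the coefficient of $\eps^k$ in $v_{2j+1}(\lambda(\eps))$ as in \eqref{eqve}. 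Hence the hypothesis $M_r(h)\equiv 0$ for $r<k$ is equivalent to $\bar v_{2j+1,r}=0$ for all $j$ and all $r<k$, and the whole lemma reduces to a statement about the Taylor coefficients in $\eps$ of the six polynomials above evaluated along the analytic arc $\lambda=\lambda(\eps)$. The six items exhaust the possible positions of $\lambda^*$ in the stratification of the center variety by the components (I)--(III): the generic part of each component (items~(i)--(iii)), the two pairwise intersections not contained in a deeper stratum (item~(iv), Hamiltonian$\,\cap\,$symmetric, and item~(v), symmetric$\,\cap\,$Darboux), and the deepest stratum, the linear center (item~(vi)).

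The engine is elementary order-of-vanishing bookkeeping for products. Each $v_{2j+1}$ is a product of the basic parameters $\lambda,\xi,\nu,a,\omega,\theta,\eta,\mu$, the only nonmonomial factor being $4(\mu^2+\theta^2)-a^2$ in $v_{11}$. For the arc $p(\eps)=\sum_{\ell\ge0}p_\ell\eps^\ell$ of a single parameter, $p(\eps)$ has $\eps$-order $0$ with leading coefficient $p(\lambda^*)$ if $p(\lambda^*)\ne0$, and $\eps$-order $\ge1$ otherwise; the $\eps$-order of a product is the sum of the orders and its leading coefficient the corresponding product. So in each stratum it suffices to record which of the eight parameters vanish at $\lambda^*$, read off the $\eps$-order of each $v_{2j+1}(\lambda(\eps))$, and — under the standing hypothesis that all earlier $\bar v$'s vanish, which forces the prescribed $\lambda_{i,\ell}$ to be $0$ — pick out the coefficient of $\eps^k$; this yields the displayed formulas. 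For the generic strata (item~(i) Case~1, item~(ii), item~(iii)) all orders are already minimal at $k=1$ and the map $\phi_k$ of \eqref{eqmap} has the same range for every $k$: all of $\mathbb{R}^6$ in (iii) (the last independent direction coming from the factor $4\mu_0\mu_1+4\theta_0\theta_1-a_0a_1$ in $\bar v_{11,1}$), and a fixed proper linear subspace in (i) Case~1 and (ii) (there the last coordinates are forced to be proportional). Hence $k^*=1$ in these cases. For the degenerate strata one iterates: imposing $M_1\equiv\cdots\equiv M_{k-1}\equiv0$ forces further $\lambda_{i,\ell}$ to vanish, raising the $\eps$-order of some factors, and the range of $\phi_k$ stabilises — at $k^*=2$ for (i) Case~2, (iv), (v) and at $k^*=5$ for the linear center (vi). In each case one checks that the range of $\phi_{k^*}$ contains the range of $\phi_k$ for all other $k$ (for $k<k^*$ it is a subspace contained in it; for $k>k^*$ the additional vanishing conditions only shrink it), which pins down the essential order, and one then selects a minimal subset of the $\lambda_{i,\ell}$ that parametrises the maximal range — the essential parameters listed — normalising the remaining non-essential ones to the indicated convenient values ($a_1=1$, $\theta_1=1$, and so on).

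The work lies in the bookkeeping for the degenerate strata. In item~(i) Case~2 the parameter $a$ enters $v_5,v_7$ linearly but $v_9,v_{11}$ quadratically, so the $\eps$-orders of $v_9,v_{11}$ jump two units at a time; this is exactly what produces the parity dichotomy in $k$, the free index $i$ with $\bar v_{5,k}=a_i\nu_{k-i}$ and $\bar v_{7,k}=\theta_0 a_i\omega_{k-i}$, and the vanishing $\bar v_{9,k}=\bar v_{11,k}=0$ for odd $k$; one must check that no admissible choice of $i$ produces a lower order than claimed. For the linear center (vi) the iteration has to be run through four successive vanishing orders before reaching $k^*=5$, keeping track at each step of exactly which $\lambda_{i,\ell}$ are forced to zero. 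One must also watch the sub-cases inside a stratum carefully — for instance the Hamiltonian symmetric stratum subdivides according to which of $\omega_0,\eta_0,\mu_0$ are nonzero — since the $\eps$-orders, hence the formulas and the essential order, depend on them. Finally, in the strata where the range of $\phi_{k^*}$ is only dense in $\mathbb{R}^6$ rather than all of it (the analogue of the sets $\mathcal R_1,\mathcal R_2$ appearing in the proof of Lemma~\ref{lem:CJlike}(viii)--(x)), one must additionally exhibit higher-order perturbations whose coefficient ranges cover the missing directions, so as to justify that $k^*$ is still the essential order in the required sense.
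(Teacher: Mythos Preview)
Your approach is exactly the one the paper intends: it gives no explicit proof of this lemma, introducing it only with ``Analogously to the previous subsection'' and relying on the reader to rerun the computations of Lemma~\ref{lem:CJlike} with the six cubic Poincar\'e--Liapunov constants and the three center components~(I)--(III). Your order-of-vanishing bookkeeping for the products $v_5=\nu a$, $v_7=\omega\theta a$, $v_9=\theta a^2\eta$, $v_{11}=\theta a^2[4(\mu^2+\theta^2)-a^2]$, the parity argument in (i) Case~2 via the quadratic factor $a^2$, and the range analysis for $\phi_k$ faithfully reproduce (and in fact flesh out) what the paper leaves implicit.
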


\begin{theorem}\label{th:ep3}
The essential perturbations and the essential Melnikov functions
are:
\begin{itemize}

\item[{\rm (i)}] Generic Hamiltonian center: $\la_{0}=a_0=\xi_0=0$ and $\nu_0^2+\theta_0^2 \neq 0$

\begin{itemize}

\item Case 1: $\nu_0^2+\omega_0^2 \ne 0$
\begin{equation*}
\begin{array}{lll}
\dot{x}&=&-y-(\omega_0+ \theta_0)x^3 - (\eta_0-3 \mu_0) x^2 y - (3 \omega_0 - 3 \theta_0)x y^2 \\
&& - (\mu_0-\nu_0) y^3  + \e \la_1 x + \e a_1 x^3 - \e (2 a_1 - \xi_1) x y^2,\\

\dot{y}&=&x+ (\mu_0+\nu_0) x^3 + ( 3 \omega_0+ 3 \theta_0) x^2y + (\eta_0-3 \mu _0) x y^2 \\
&& + (\omega_0- \theta_0) y^3 + \e \la_1 y + \e 2 a_1 x^2 y - \e a_1 y^3.
\end{array}
\end{equation*}
The corresponding essential Melnikov function is the first one and it has the form
$$M_1(h)=\la_{1}h B_1(h)+\xi_1h^3 B_3(h)+\nu_0 a_1 h^5 B_5(h)+ \omega_0 \theta_0 a_1 h^7 B_7(h).$$

\item Case 2. $\nu_0=\omega_0=0$
\begin{equation*}
\begin{array}{lll}
\dot{x}&=&-y-\theta_0x^3 - (\eta_0-3 \mu_0) x^2 y +  3 \theta_0 x y^2 - \mu_0 y^3 \\
&& - \e (\omega_1-a_1)x^3 - \e (3 \omega_1+ 2 a_1) x y^2+ \e \nu_1 y^3 + \e^2 \la_2 x + \e^2 \xi_2 x y^2,\\

\dot{y}&=&x+ \mu_0 x^3 + 3 \theta_0 x^2y + (\eta_0-3 \mu _0) x y^2 - \theta_0 y^3\\
&& + \e \nu_1 x^3 + \e ( 3 \omega_1 + 2 a_1 ) x^2 y - \e a_1 y^3 + \e^2 \la_2 y.
\end{array}
\end{equation*}
The corresponding essential Melnikov function is the second one and it has the form
$$ \begin{array}{lll} \displaystyle M_2(h) & = & \displaystyle \la_{2}hB_1(h)+\xi_2h^3 B_3(h)+a_1 \nu_1 h^5 B_5(h)+ a_1 \omega_1 h^7 B_7(h) \\ & & \displaystyle + a_1^2(\eta_0  h^9 B_9(h)
+  h^{11} B_{11}(h)).\end{array} $$
\end{itemize}

\item[{\rm (ii)}] Generic symmetric center: $\lambda_{0} \, = \, \xi_0 \, = \, \nu_0 \, = \, \theta_0 \, = \, 0$, $a_0 \neq 0$ and
$\omega_0^2+\eta_0^2+ (4\mu_0^2-a_0^2)^2 \ne 0$.

\begin{equation*} \begin{array}{lll} \displaystyle \dot{x} & = & \displaystyle - y  -
(\omega_0-a_0)x^3-(\eta_0-3\mu_0)x^2y   -
(3\omega_0+2a_0)xy^2-\mu_0y^3\\ & & \displaystyle + \e\lambda_1 x-\e
\theta_1x^3  + \e(3\theta_1+\xi_1)xy^2+\e\nu_1y^3,
\\ \displaystyle \dot{y} & = & \displaystyle x  +
\mu_0x^3+(3\omega_0+2a_0)x^2y
+(\eta_0-3\mu_0)xy^2+(\omega_0-a_0)y^3\\ & & \displaystyle +
\e\lambda_1 y+ \e\nu_1x^3+\e3\theta_1x^2y -\e\theta_1y^3 .
\end{array}\end{equation*}
The corresponding essential Melnikov function is the first one and
it has the form
$$ \begin{array}{lll} \displaystyle M_1(h) & = & \displaystyle \la_{1}hB_1(h)+\xi_1h^3 B_3(h)+\nu_1 h^5 B_5(h)+ \theta_1[\omega_0  h^7 B_7(h)+\eta_0h^9 B_9(h)\\ & & \displaystyle +(4\mu_0^2-a_0^2)h^{11} B_{11}(h)]. \end{array}$$

\item[{\rm (iii)}] Generic Darboux center: $\lambda_{0} = \xi_0 = \nu_0 = \omega_0 = \eta_0 = 4(\mu_0^2+\theta_0^2)-a_0^2 \, = \, 0$, $a_0 \ne 0$ and $\theta_0 \ne 0$.

\begin{equation*} \begin{array}{lll} \displaystyle \dot{x} & = & \displaystyle - y  -
(\theta_0-a_0)x^3+3\mu_0x^2y  - (-3\theta_0+2a_0)xy^2-\mu_0y^3\\ & &
\displaystyle+ \e\lambda_1 x- \e(\omega_1-a_1)x^3-\e\eta_1x^2y  -\e
(3\omega_1+2a_1-\xi_1)xy^2-\nu_1y^3,
\\ \displaystyle \dot{y} & = & \displaystyle x  +
\mu_0x^3+(3\theta_0+2a_0)x^2y -3\mu_0xy^2-(\theta_0+a_0)y^3 \\ & &
\displaystyle+ \e\lambda_1 y+ \e\nu_1x^3+\e(3\omega_1+2a_1)x^2y
+\e\eta_1xy^2+\e(\omega_1-a_1)y^3.
\end{array}\end{equation*}
The corresponding essential Melnikov function is the first one and
it has the form
$$ \begin{array}{lll} \displaystyle M_1(h) & = & \displaystyle \la_{1}hB_1(h)+\xi_1h^3 B_3(h)+\nu_1 h^5 B_5(h)+ \omega_1  h^7 B_7(h)+\eta_1h^9 B_9(h)\\ & & \displaystyle +a_1h^{11} B_{11}(h).\end{array} $$

\item[{\rm (iv)}] Symmetric Hamiltonian center: $\la_0=\xi_0=a_0=\nu_0=\theta_0=0$ and $\omega_0^2+\eta_0^2+\mu_0^2 \ne 0$.

\begin{equation*} \begin{array}{lll} \displaystyle \dot{x} & = & \displaystyle - y  -
\omega_0x^3-(\eta_0-3\mu_0)x^2y  - 3\omega_0xy^2-\mu_0y^3\\ & &
\displaystyle + \e (1-\theta_1)x^3  +
\e(3\theta_1-2)xy^2+\e\nu_1y^3+\e^2 \lambda_2 x +\e^2\xi_2xy^2,
\\ \displaystyle \dot{y} & = & \displaystyle x  +
\mu_0x^3+3\omega_0x^2y +(\eta_0-3\mu_0)xy^2+\omega_0y^3 \\ & &
\displaystyle + \e\nu_1x^3+\e(3\theta_1+2)x^2y -\e(1+\theta_1)y^3+
\e^2\lambda_2 y.
\end{array}\end{equation*}
The corresponding essential Melnikov function is the second one and
it has the form
$$M_2(h)=\la_{2}hB_1(h)+\xi_2h^3 B_3(h)+\nu_1 h^5 B_5(h)+  \omega_0\theta_1 h^7 B_7(h).$$

\item[{\rm (v)}] Symmetric Darboux center: $\lambda_{0} = \xi_0 = \nu_0 = \theta_0 = \omega_0=\eta_0=4 \mu_0^2-a_0^2= 0$ and $a_0 \neq 0$.

\begin{equation*} \begin{array}{lll} \displaystyle \dot{x} & = & \displaystyle - y  +a_0x^3+3\mu_0x^2y  -
2a_0xy^2-\mu_0y^3  - \e(\omega_1-a_1+1)x^3  \\ & & \displaystyle    -\e
\eta_1x^2y  - \e(3\omega_1+2a_1-3)xy^2 + \e^2\lambda_2 x
+\e^2\xi_2xy^2+\e^2\nu_2y^3 ,
\\ \displaystyle \dot{y} & = & \displaystyle x  +
\mu_0x^3+2a_0x^2y -3\mu_0xy^2-a_0y^3 +\e(3\omega_1+2a_1+3)x^2y
\\ & & \displaystyle
+\e\eta_1xy^2+\e(\omega_1-a_1-1)y^3+\e^2
\lambda_2 y +\e^2\nu_2x^3.
\end{array}\end{equation*}
The corresponding essential Melnikov function is the second one and
it has the form
$$ \begin{array}{lll} \displaystyle M_2(h) & = & \displaystyle \la_{2}hB_1(h)+\xi_2h^3 B_3(h)+ \nu_2 h^5 B_5(h)+  \omega_1 h^7 B_7(h)+ \eta_1 h^9 B_9(h) \\ & & \displaystyle + a_1 h^{11} B_{11}(h). \end{array}$$

\item[{\rm (vi)}] Linear center: $\la_0=\xi_0=a_0=\nu_0 = \theta_0 = \omega_0 = \eta_0  = \mu_0 = 0$.

\begin{equation*} \begin{array}{lll} \displaystyle \dot{x} & = & \displaystyle - y  -
\e(\omega_1-1)x^3+\e (3\theta_1-2)xy^2-\e^2\eta_2x^2y\\ & &
\displaystyle-\e^3\omega_3x^3-3\e^3\omega_3xy^2+
  \e^4\nu_4y^3+ \e^5\lambda_5 x+\e^5\xi_5xy^2,
\\ \displaystyle \dot{y} & = & \displaystyle x  +\e(3\theta_1+2)x^2y
-\e(\theta_1+1)y^3 +\e^2\eta_2xy^2\\ & &
\displaystyle+3\e^3\omega_3x^2y+\e^3\omega_3y^3+
  \e^4\nu_4x^3+ \e^5\lambda_5  y.
\end{array}\end{equation*}
The corresponding essential Melnikov function is the fifth one and
it has the form
$$ \begin{array}{lll} \displaystyle M_5(h) & = & \displaystyle \la_{5}hB_1(h)+\xi_5h^3 B_3(h)+ \nu_4 h^5 B_5(h)+ \theta_1\omega_3 h^7 B_7(h) \\ & & \displaystyle +\theta_1 \eta_2 h^9 B_9(h) + (4\theta_1^3-\theta_1) h^{11} B_{11}(h). \end{array}$$
\end{itemize}
\end{theorem}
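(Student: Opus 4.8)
The plan is to deduce Theorem \ref{th:ep3} from Lemma \ref{lemcubic} together with Theorem \ref{thcj}, by a uniform case-by-case argument, one step for each center case (i)--(vi), with (i) further split into its two sub-cases. In each case Lemma \ref{lemcubic} already hands us the essential order $k^{*}$, a choice of essential parameters, and the explicit values of the six coefficients $\ov_{2j+1,k^{*}}$, $j=\overline{0,5}$, of the essential Melnikov function, once the non-essential parameters are fixed to the simple values prescribed there (most of them $0$, a few normalized to $1$, such as $a_1=1$ in cases (iv), (v), (vi), or $\theta_1=1$ in cases (v) and (vi)). So the first step is, in each case, to substitute $\lambda=\lambda(\varepsilon)$ with exactly these choices into the Sibirsky normal form \eqref{cubic}, collect powers of $\varepsilon$, and verify that one recovers the explicit differential system displayed in that item; this is a routine expansion with no real content.

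The second step is to read off the Melnikov function. The cubic family has six Poincar\'e--Liapunov constants $v_1,v_3,\dots,v_{11}$, so $N=5$, and Theorem \ref{thcj} gives $M_{k^{*}}(h)=\sum_{j=0}^{5}\ov_{2j+1,k^{*}}\,h^{2j+1}B_{2j+1}(h)$ with intrinsic, linearly independent Bautin functions $h^{2j+1}B_{2j+1}(h)$. Plugging in the expressions of $\ov_{2j+1,k^{*}}$ from Lemma \ref{lemcubic}, specialized to the chosen parameter values, yields the stated formulas for $M_1$, $M_2$ or $M_5$ in each item: whenever a nonzero constant (such as $a_0$, $\theta_0$, $a_0\theta_0$, $a_0^2\theta_0$, and so on) multiplies some $B_{2j+1}$, it is absorbed into a rescaling of that Bautin function, and whenever two or more non-essential parameters appear together in a combination (for instance $4\mu_0\mu_1+4\theta_0\theta_1-a_0a_1$ in case (iii), or $4(\mu_1^2+\theta_1^2)-a_1^2$ in the linear center) the non-essential part is set to $0$, leaving only the essential parameter visible. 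As a sample check, in case (vi) Lemma \ref{lemcubic} gives $\ov_{1,5}=\lambda_5$, $\ov_{3,5}=\xi_5$, $\ov_{5,5}=a_1\nu_4$, $\ov_{7,5}=a_1\theta_1\omega_3$, $\ov_{9,5}=a_1^2\theta_1\eta_2$, $\ov_{11,5}=a_1^2\theta_1(4(\mu_1^2+\theta_1^2)-a_1^2)$; setting $a_1=1$ and $\mu_1=0$ and absorbing constants produces exactly the displayed $M_5(h)$.

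The third point is to certify essentiality in the sense of the definition: that $k^{*}$ is the lowest order at which the range of the map $\phi_k$ of \eqref{eqmap} is maximal, and that the listed essential parameters parametrize a submanifold of $\Lambda_{k^{*}}$ of least dimension on which $\phi_{k^{*}}$ attains this maximal range. For the generic cases (i) Case 1, (ii) and (iii) this is immediate from Lemma \ref{lemcubic}: already at order $1$ the map $\phi_1$ attains its maximal range, so no lower order is available, and the number of essential parameters equals the dimension of that range, hence is minimal. For the degenerate cases (i) Case 2, (iv), (v), and above all the linear center (vi) with $k^{*}=5$, one invokes the recursive relations in Lemma \ref{lemcubic} --- which force the top coefficients (typically $\ov_{9,k}=\ov_{11,k}=0$, or analogous vanishings) for every order $k$ below $k^{*}$ --- to conclude that the range of $\phi_k$ at each such lower order is strictly contained in the range of $\phi_{k^{*}}$; then an explicit rank computation shows that, with the non-essential parameters fixed as prescribed, the six functions $\ov_{2j+1,k^{*}}$ of the retained essential parameters already sweep out this maximal range.

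The main obstacle I expect is precisely this last verification in the most degenerate situations, in particular the linear center (vi): there one must track the first several Poincar\'e--Liapunov coefficients through the substitution $\lambda_i(\varepsilon)=\sum_{\ell}\lambda_{i,\ell}\varepsilon^{\ell}$, show that the natural low-order choices of the parameters make $M_1,\dots,M_4$ vanish identically and only at order $5$ produce six independent coefficients, and rule out any cleverer choice that would lower either the order or the number of essential parameters. Controlling how the vanishing conditions propagate through the ideal generated by $v_1,\dots,v_{11}$ is the delicate step; everything else reduces to polynomial algebra and power-series bookkeeping, and the essentiality claims themselves are inherited directly from Lemma \ref{lemcubic}.
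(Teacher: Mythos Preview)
Your proposal is correct and follows the same approach the paper takes (implicitly): Theorem \ref{th:ep3} is an immediate corollary of Lemma \ref{lemcubic} together with Theorem \ref{thcj}, obtained by specializing the parameters in \eqref{cubic} to the essential ones identified in the lemma and reading off the coefficients $\ov_{2j+1,k^{*}}$. One small bookkeeping slip: your parenthetical list of normalizations is off --- in case (v) the essential parameters include $a_1$ and it is $\theta_1$ that is fixed to $1$, while in case (vi) it is $a_1=1$ with $\theta_1$ essential --- but this does not affect the argument.
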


\section{Example \label{sect4}}
Consider the following  system with a center at the origin
\begin{equation}\label{ex}
\dot{x} \, = \, -y(1+y), \quad \dot{y} \, = x(1+y),
\end{equation}
having the first integral $H(x,y)=\sqrt{x^2+y^2}$ and the corresponding
inverse integrating factor  $V(x,y)=(1+y)\sqrt{x^2+y^2}$. Its period annulus is
$\mathcal{P} \, = \, \{ H=h\ :\  h \in (0,1)\}$. System \eqref{ex}
is in the standard Bautin form and, according to the classification
of quadratic centers (given in paragraph 3.1), is a generic
symmetric (reversible) center. Consider now a perturbation of system
(\ref{ex}) by quadratic polynomials with coefficients which are
analytic in the small bifurcation parameter $\eps$:
\begin{equation}\label{exp} \dot{x} \, = \, -y(1+y)+\varepsilon p(x,y, \varepsilon), \  \dot{y} \, = x(1+y)+\varepsilon q(x,y, \varepsilon). \end{equation}
As we explained in the beginning of paragraph 3.1, there exists an
affine change of variables which is analytic with respect to $\eps$
that transforms system \eqref{exp} in the Bautin standard form
\eqref{eq:qsB0}. This transformation is the identity for $\eps=0$,
in this case, meaning that the unperturbed system \eqref{ex} does
not change after this transformation. Note that we have
$(\la_{1,0},\la_{2,0},\la_{3,0},\la_{4,0},\la_{5,0},\la_{6,0})=(0,0,0,1,0,-1)$.
We apply  Theorem \ref{th:ep2} (ii) and deduce that an essential perturbation
of center \eqref{ex} is
\begin{equation*}
\begin{array}{ll}
\dot{x}=-y(1+y)+\eps\left(  \la_{1,1}x+\left(2\la_{2,1}+\la_{5,1}\right)xy\right)\\
\dot{y}=x(1+y)+ \eps \left( \la_{1,1}y+\la_{2,1}x^2
-\la_{2,1}y^2\right),
\end{array}
\end{equation*}
and the essential Melnikov function is the first one and it has the
form
$$M_1(h)=\la_{1,1}hB_1(h)+\la_{5,1}h^3 B_3(h)+\la_{2,1}  h^5\tilde B_5(h). $$
As it is proved in \cite{BGY}, there are at most $2$ zeroes of $M_1(h)$ in
$\mathcal{P}$. \par Indeed, as it is proved in \cite{Iliev}, when $M_1(h) \equiv 0$, the expression of the higher-order
Melnikov function is analogous to $M_1(h)$. Therefore, the cyclicity of $\mathcal{P}$ under quadratic perturbations
is $2$. However in \cite{BGY} it is stated erroneously that the function $M_3(h)$ can have $3$ zeroes. We remark that, if one uses the perturbative system considered in \cite{BGY} and applies the method described in this manuscript, the same conclusion that the essential Melnikov function is the first one is accomplished.

\section{On the finiteness of the number of limit cycles bifurcating from the period annulus $\mathcal{P}$ \label{sect5}}

In this manuscript we describe a method to give an essential perturbation for a family of planar polynomials differential systems (\ref{eq2}) which unfold a system with a period annulus $\mathcal{P}$ corresponding to a nondegenerate center. \par The existence of a essential perturbation may induce the idea that the cyclicity of any period annulus $\mathcal{P}$ surrounding a nondegenerate center is finite. Assume that, for a particular family
(\ref{eq3}), we have that $M_{k^*}(h)$ is the essential Melnikov function where $k^*$ is the essential order. This implies that if a
particular system (\ref{eq3}) has $\ell$ limit cycles which bifurcate from the orbits of $\mathcal{P}$, then $M_{k^*}(h)$ has at
least $\ell$ isolated zeroes (counted with multiplicity). We recall that $M_{k^*}(h)$ is analytic in the interval $[h_0,h_1)$, where
$h_0 \in \mathbb{R}$ corresponds to the inner boundary, that is the center singular point, and $h_1 \in \mathbb{R} \cup \{+\infty\}$ is
the level set of the first integral $H(x,y)$ corresponding to the outer boundary of $\mathcal{P}$. If the number of isolated zeroes of
$M_{k^*}(h)$ is finite, then the cyclicity of $\mathcal{P}$ is finite. \par
Due to analyticity, any Melnikov function (and in particular the essential Melnikov
function $M_{k^*}$) can have a countable set of zeros. Theoretically
the set of zeros can be both finite and infinite. If the number of
isolated zeroes of $M_{k^*}(h)$ is infinite, then they need to
accumulate to $h_1$ (we remind that $h_1$ is the level value of the
first integral corresponding to the outer boundary of the period
annulus). The fact that this oscillatory behavior does not appear
for a period annulus  of a Hamiltonian or a generic Darboux
integrable system has been shown in \cite{GavNov}; see also the
references therein.\par We remark that the fact that the number of
isolated zeroes of $M_{k^*}(h)$ is infinite does not contradict the
finiteness of the number of limit cycles for a particular fixed
system (\ref{eq3}) which was proved by \'Ecalle \cite{Ecalle} and
Ilyashenko \cite{Ilyashenko}, as we explain below. Assume, to fix ideas, that
$M_{k^*}(h)$ has an infinite number of simple zeroes which we denote
by $\xi_n$, with $n \in \mathbb{N}$. We can assume without loss of
generality that $\xi_n < \xi_{n+1}$ and we have that $\lim_{n\to
\infty} \xi_n \, = \, h_1$. For each $\xi_n$, we have by the
Implicit Function Theorem (see also Theorem \ref{thbifur}) that there exists a value
$\varepsilon_n>0$ and a function $\vartheta_n(\varepsilon)$ analytic
in the interval $\varepsilon \in (-\varepsilon_n, \varepsilon_n)$
such that $d(\vartheta_n(\varepsilon); \varepsilon) \equiv 0$ for
all $|\varepsilon|<\varepsilon_n$. For a fixed value $\varepsilon\in
(-\varepsilon_n, \varepsilon_n)\setminus \{0\}$, the point $\vartheta_n(\varepsilon)$ corresponds to a limit cycle of the
system (\ref{eq3-e}) which has bifurcated from the periodic orbit
corresponding to the level $\xi_n$. For a fixed value of
$\varepsilon$, system (\ref{eq3-e}) has a finite number of limit
cycles, which implies that $\lim_{n \to \infty} \varepsilon_n \to
0$. Then, given a fixed value of $\varepsilon \neq 0$ there is a
finite number of intervals in the set $\{ (-\varepsilon_n,
\varepsilon_n) \, : \, n \in \mathbb{N} \}$ in which $\varepsilon$
belongs to. Thus, the functions $\vartheta_n(\varepsilon)$ only
exist for this finite number of intervals and, as a consequence,
there is only a finite number of limit cycles for system (\ref{eq3-e})
for the considered fixed value of $\varepsilon$. If we take a value
of $\varepsilon$ closer to $0$ we may have more limit cycles and
since $\varepsilon_n>0$ for all $n \in \mathbb{N}$ and $\lim_{n \to
\infty} \varepsilon_n \to 0$, we have that given a number $\ell$,
there is always a value of $\varepsilon$ close enough to $0$ such
that the corresponding system (\ref{eq3-e}) has at least $\ell$ limit
cycles bifurcating from the periodic orbits of $\mathcal{P}$.
Therefore, even though for a fixed system (\ref{eq3-e}) the number of
limit cycles is finite, one has that the cyclicity of the period
annulus $\mathcal{P}$ is infinite. \par However, it turns out that, as far as
the authors know, there is no example of a Melnikov function with
such an oscillatory behavior. Indeed, in all the papers known by the
authors, the Melnikov function satisfies a Chebyshev property. More
precisely, as we have proved in Theorem \ref{thcj}, $M_k(h)$ can be
written as the linear combination (\ref{eqMkj}) of $N+1$ linearly
independent functions $h^{2j+1}B_{2j+1}(h)$ (called Bautin
functions), which are analytic for $h$ in the whole period annulus
and with $B_{2j+1}(0)$ a nonzero constant, for $j=\overline{0, N}$.
It turns out, in the studied examples, that the Bautin functions are
not only Chebyshev in a neighborhood of the origin but in the whole
period annulus. This implies that the number of isolated zeroes
(counted with multiplicity) of any linear combination of these $N+1$
functions is at most $N$. We recall that given $N+1$ analytic functions on a real interval $L$, they form an extended Chebyshev system (in short ET-system) on $L$ if any nontrivial linear combination has at most $N$ isolated zeros on $L$, counted with multiplicity. Some papers even conjecture such Chebyshev property for some particular systems, see \cite{MarSaaUriWall}.

\section*{Acknowledgements}

The authors are partially supported by a MICINN/FEDER grant number MTM 2011-22877 and by a AGAUR (Generalitat de
Ca\-ta\-lu\-nya) grant number 2009SGR 381. The first author was also partially supported by a grant of the
Romanian National Authority for Scientific Research, CNCS UEFISCDI,
project number PN-II-ID-PCE-2011-3-0094.


\end{document}